\documentclass {elsarticle}


\journal{Systems and Control Letters}









\bibliographystyle{elsarticle-num}
\usepackage{graphicx}
\usepackage[all]{xy}
\usepackage{amsmath} 
\usepackage{amssymb}  
\usepackage{fourier}
\usepackage{color}
\newtheorem{thm}{Theorem}[section]
\newtheorem{prop}[thm]{Proposition}

\newtheorem{defn}[thm]{Definition}
\newtheorem{cor}[thm]{Corollary}
\def\qed{\hbox{\vrule width1.0ex height1.ex}\vspace{2mm}}
\newenvironment{proof}{\noindent{\bf Proof.}}{\qed \vskip 0.0ex}
\def\eqref#1{(\ref#1)}
\newcommand{\Rset}{\mathbb R}
\newcommand{\Cset}{\mathbb C}

\newcommand{\dd}{\mathrm d}

\newcommand{\e}{\mathrm e}
\newcommand{\ee}{\mathrm e}

\newcommand{\A}{\mathcal A}
\newcommand{\B}{\mathcal B}
\newcommand{\C}{\mathcal C}

\newcommand{\K}{\mathcal K}
\newcommand{\R}{\mathcal R}
\newcommand{\V}{\mathcal V}

\begin{document}
\begin{frontmatter}

\title{Exact observability and controllability for linear neutral type systems \tnoteref{mytitlenote}}
\tnotetext[mytitlenote]{Extended version of a paper presented at MTNS 14, Groningen, 2014.
This work was supported in part by PROMEP (Mexico) via "Poyecto de Redes" and by the  Polish Nat. Sci. Center,
grant N~514~238~438.
}
\author[authorlabel1]{R. Rabah\corref{mycorrespondingauthor}}
\ead{Rabah.Rabah@mines-nantes.fr}
\author[authorlabel2]{G. M. Sklyar}
\ead{sklar@univ.szczecin.pl}
\address[authorlabel1]{IRCCyN/\'Ecole des Mines, 4 rue Alfred Kastler, BP 20722,
44307 Nantes, France.}
\address[authorlabel2]{Inst. of Math., University of Szczecin,
Wielkopolska 15, 70451 Szczecin, Poland }
\cortext[mycorrespondingauthor]{Corresponding author}
\begin{abstract}
The problem of  exact observability  is analyzed for a wide class  of neutral type systems by
an infinite dimensional approach. The duality with the  exact controllability problem is the main tool. It is based
on an explicit expression of a neutral type system which corresponding to the abstract
adjoint system. A nontrivial  relation is obtained between the initial neutral system and the system obtained via
the adjoint abstract state operator. The characterization of the duality between  controllability and observability is deduced,
and then observability conditions are obtained.
\end{abstract}

\begin{keyword}
Observability \sep Controllability \sep Duality  \sep Neutral type systems
\MSC[2010] 93B05\sep  93B23
\end{keyword}

\end{frontmatter}

\vskip -13cm \noindent \textcolor{blue}{http://dx.doi.org/10.1016/j.sysconle.2015.12.010} \vskip 13cm
\section{Introduction}
Approximate and spectral controllability  and the corresponding dual notions of observability  for delay systems of neutral type
were widely investigated at the end of the last century (see  books by \cite{Bensoussan_etal_1992}  and
 \cite{Salamon_Pitman_1984} and references therein). The duality between these notions for systems of neutral
type is not so trivial.  The main reason is that the dual or adjoint  system is not obtained directly by simple transposition.
It is necessary to consider the duality using some hereditary product proposed first for retarded systems and later
for neutral type systems
(see \cite{Delfour_Manitius_I_1980, Delfour_Manitius_II_1980, Salamon_IEEE1984} and  \cite{Salamon_Pitman_1984}
for example).
In this context, the important technique of the so called structural operator was used. It enables some
explicit formulations for duality between approximate controllability,  spectral controllability and the same notions for observability
and the characterizations of that concepts. We shall consider some of them in the context of our framework.

The infinite dimensional setting has been developed essentially for exact controllability and often for
neutral type systems without distributed delays. The exact observability problem has been less studied. 
In \cite{Metelskiy_Minyuk_2006, Khartovskii_2012} and \cite{Khartovskii_Pavlovskaya_2013}   an
approach is described based on the reconstruction of a part of the state for the case of a neutral type system with discrete delays.
A duality condition with null controllability is given. The time of  controllability (and  of possible reconstruction
of a part of the observed state) is estimated  sufficiently large, without more precision.

The present paper is concerned with exact observability  which is related to the notion of exact controllability
developed in the paper of the authors \cite{Rabah_Sklyar_2007} as an extension of other results obtained essentially
for neutral type systems with discrete delay \cite{OConnor_Tarn_1983b, Bartosiewicz_1984}. The semigroup approach used by the authors
in \cite{Rabah_Sklyar_2007} is based on the model introduced in \cite{Burns_Herdman_Stech_1983} in the product space $M_2$
(see the definition below, in this section).
 In the infinite dimensional  setting described in \cite{Rabah_Sklyar_2007}, exact  controllability means
 reachability of the operator domain because reachability of all the state space is
not possible by finite dimensional control. Hence, as it may be expected,  the dual notion of observability is
also adapted. Here, the approach using the structural operator is not used.
Considering the adjoint system in the operator form, that is in an  infinite dimensional
framework, we construct a transposed  neutral
type system corresponding to the adjoint system in Hilbert space.
This relation between the adjoint semigroup and the obtained neutral type system is different from that of the model
given in \cite{Burns_Herdman_Stech_1983}.

The notions of exact controllability and observability are important because they imply exponential stabilizability
or exponential convergences for possible estimators.

The results obtained in \cite{Rabah_Sklyar_2007} use the approach of moment problems and allows
the minimal time of exact controllability to be determinated. The  main contribution of our study is to specify how duality
may be used in a nontrivial context and to deduce  the characterization of exact  observability
and also the minimal time of observation.

\medskip

We consider the neutral type system given by the equation
\begin{equation}\label{eq:1}
\dot z(t) = A_{-1}\dot z(t-1) +
\int^0_{-1}\left [A_2(\theta)\dot z(t+\theta) + A_3(\theta) z(t+\theta)\right ]{\mathrm d}\theta
\end{equation}
where $A_{-1}$  is a constant $n\times n$-matrix,
and $A_2, A_3$ are $n\times n$-matrices whose
 elements belong to $L_2(-1,0).$

If we introduce the linear  operator $L : H^1([-1,0];\Rset^n) \longrightarrow \Rset^n$, defined by
\begin{equation}\label{op:L}
L f=\int_{-1}^0A_2(\theta)f'(\theta)+A_3(\theta)f(\theta)\dd \theta
\end{equation}
then the system may be written concisely as
$$
\dot z(t) = A_{-1}\dot z(t-1) +Lz_t, \qquad z_t(\theta)=z(t+\theta).
$$
This system may be represented, following the approach developed in \cite{Burns_Herdman_Stech_1983},
by an operator model in Hilbert space 
given by the equation
 \begin{equation}\label{eq:2}
   \dot x= {\mathcal A} x, \quad x(t)=\begin{pmatrix}
v(t)\\ z_t(\cdot)
\end{pmatrix},
\end{equation}
where ${\mathcal A}$ is the infinitesimal generator of a $C_0$-semigroup  $S(t)= \e^{\A t}$
given in the product space
$$
M_2=M_2(-1,0;\Rset^n)\stackrel{\mathrm{def}}{=} \Rset^n\times L_2(-1,0;\Rset^n)
$$
and defined by
\begin{equation}\label{2s}
{\mathcal A} x(t)={\mathcal A}
\begin{pmatrix}
v(t)\cr z_t(\cdot)
\end{pmatrix}  =
\begin{pmatrix}
Lz_t(\cdot)  \cr {\mathrm d}z_t(\theta)/ {\mathrm d}\theta
\end{pmatrix},
   \end{equation}
with the domain $D({\A})\subset M_2$ given by
\begin{equation}\label{2s_domain}
      \left \{
\begin{pmatrix}
 v \cr \varphi(\cdot)
\end{pmatrix}
: \varphi(\cdot) \in
   H^1, v=\varphi(0)-A_{-1}\varphi(-1)\right\},
\end{equation}
where $H^1=H^1([-1,0];\Rset^n)$.

 We consider the finite dimensional observation
\begin{equation}\label{output1}
 y(t)= \C x(t) 
\end{equation}
where $\C$ is a linear operator and $y(t) \in \Rset^p$ is a finite dimensional output. There are several
ways to design the output operator $\C$ \cite{Salamon_Springer_1983,Salamon_Pitman_1984,Metelskiy_Minyuk_2006}.
One of our goals in this paper is to investigate how to design
a minimal output operator like
\begin{equation}\label{output2}
 \C x(t)= Cz(t) \qquad \mbox{or} \qquad  \C x(t)= Cz(t-1),
\end{equation}
where $C$ is a $p\times n$ matrix. More general outputs, for example with several and/or distributed delays are not
considered in this paper. We want to use some results on exact controllability in order to analyze, by duality,
the exact observability property in the  infinite dimensional setting like, for example, in
\cite{Tucsnak_Weiss_2009}.

The operator $\C$ defined in (\ref{output2}) is linear but not bounded in $M_2$.
However, in both cases it is admissible in the following sense:
$$
\int_0^T\Vert\C S(t)x_0\Vert_{\Rset^n}^2\dd t \le \kappa^2 \|x_0\|_{M_2}^2, \qquad \forall
x_0 \in D({\mathcal A}),
$$
because it is bounded on $D(\A)$. We recall that if $x_0 \in D({\mathcal A})$ then
$S(t)x_0 \in D({\mathcal A}), \ t \ge 0$ (see for example \cite{Pazy_1983}).
 In fact, $\C$ is  admissible  in the
resolvent norm:
$$
\|x_0\|_{-1}=\left \Vert R(\lambda,\A)x_0\right\Vert=\left \Vert (\lambda I-\A)^{-1}x_0\right\Vert_{M_2},\quad
\lambda \in \rho(\A).
$$
This is a consequence of the fact that $\C$ is a closed operator and takes value
in a finite dimensional space (see \cite[Def.~4.3.1]{Tucsnak_Weiss_2009}
and comments on this Definition).
\begin{defn}\label{def1}
Let \ $\K$ be the output operator
$$
\K: M_2 \longrightarrow L_2(0,T;\Rset^p), \quad x_0 \longmapsto \K x_0=\C S(t)x_0.
$$
The system  (\ref{eq:1})  is said to be approximately observable (or  observable) if \ $\ker \K = \{0\}$ and
exactly observable (or continuously observable \cite{Salamon_Pitman_1984})  if
\begin{equation}\label{obsexact}
\Vert \K x_0 \Vert_{L_2}^2=\int_0^T\left\Vert \C S(t)x_0 \right \Vert_{\Rset^p}^2\dd t \ge \delta^2 \left\Vert x_0\right\Vert_{M_2}^2,
\end{equation}
for some constant $\delta$.
\end{defn}
\medskip

This is the classic definition.  In the case of a neutral type system with a finite dimensional
output (\ref{output2}) the exact observability in this sense  is not possible. It may be possible if we consider another
topology for the initial states $x_0$.\\
Unlike  approximate observability, which does not depend on the topology,
 exact observability depends essentially on the
topology in the space.
We can expect that, the given neutral type system is not exactly observable
if we consider $x_0 \in D(\A)$,  with the norm of the graph and no longer in the topology of $M_2$.
Taking in account the result on exact controllability, it seems that  (\ref{obsexact})
must be changed by taking a weaker norm for $x_0$, namely the resolvent norm
$\left \Vert (\lambda I- \A)^{-1}x_0\right\Vert$ and considering the extension of the operator $\K$
to the completion of the space with this norm.
In fact,  we obtain the observability in the initial norm but we need some delay in the observation in the general case.

Exact observability can be investigated directly, but another way is to use the
duality between exact observability and exact controllability. In \cite{Rabah_Sklyar_2007} the
conditions of exact controllability were given for the controlled system 
$$
\dot z(t) = A_{-1}\dot z(t-1) +Lz_t + Bu(t).
$$
In order to use the duality between observability and controllability, we need to compute the adjoint operator $\K^*$
in the duality  with respect to the pivot space $M_2$ in the embedding
\begin{equation}\label{embed}
X_1 \subset X=M_2 \subset X_{-1},
\end{equation}
where $X_1= D(\A)$ with the graph norm noted $\|x\|_1$ and
$X_{-1}$ is the completion of the space $M_2$ with respect to the resolvent norm
$\|x\|_{-1}=\left \|(\lambda I- \A)^{-1}x\right\|_{M_2}$.
The duality relation is
\begin{equation}\label{dual-rel}
\left \langle \K x_0, u(\cdot)\right \rangle_{L_2(0,T;\Rset^p)}
= \left \langle  x_0,\K^* u(\cdot)\right \rangle_{X_{1},X_{-1}^\dd},
 \end{equation}
where $X_{-1}^\dd$ is constructed as $X_{-1}$ with $\A^*$ instead of $\A$
(see  \cite{Tucsnak_Weiss_2009} for example).
Our purpose is to compute the adjoint operator
$$
\K^* : L_2(0,T;\Rset^p) \rightarrow X_{-1}^\dd.
$$
The abstract formulation is well known. Exact controllability is dual with exact observability
in the corresponding spaces with
the corresponding topologies.
It is expected that the operator $\K^*$ corresponds to a control operator for some adjoint system.

We then need the expression of  the adjoint state operator $\A^*$ and the corresponding adjoint
system in the same class: the class of neutral type systems. As it will be shown,  the situation is not so simple.
This is the object of Section~\ref{sec:adjsys}.
In Section~\ref{sec:duality} we return to the duality relation with the explicit
expression of the adjoint system after formulations of exact controllability results.
As the adjoint neutral
type system has a slightly different structure, we give an explicit relation between the new neutral
type system and the original one. After that we can give the expression of duality between exact
controllability and exact observability. This enables to formulate the characterization of exact observability and to give
the minimal time of observability. Some illustrative examples are given.

 For the sake of completeness, we recall  some results on approximate controllability
(from \cite{Salamon_Pitman_1984} and
\cite{Bartosiewicz_1984}) and formulate the duality with the corresponding notion of observability in our framework.
\section{The adjoint system}\label{sec:adjsys}
In this section we give the expression of the adjoint system corresponding
to the adjoint operator $\A^*$ as the operator $\A$ corresponds to the system (\ref{eq:1}).
Let us recall first  the expression of  the adjoint operator  ${\mathcal A}^*$ and its spectrum $\sigma(\A^*)$.
  \begin{prop} (\cite{Rabah_Sklyar_Rezounenko_2008}) \label{prop1}
The adjoint operator  ${\mathcal A}^{*}$ is
given by
\begin{equation}\label{adj1}
{\mathcal A}^{*}
\begin{pmatrix}
w\cr \psi (\cdot)
\end{pmatrix}
= \begin{pmatrix}
( A_2^{*}(0)w + \psi (0)
\cr -\frac{\dd [ \psi(\theta)+A_2^{*}(\theta)w ]}
{\dd \theta}
 +A_3^{*}(\theta)w
\end{pmatrix},
\end{equation}
with the domain $D({\A}^{*})$:
\begin{equation}\label{adj2}
\left\{ \left (w , \psi(\cdot)\right ) : \psi(\theta)+A_2^{*}(\theta)w \in H^1,A_{-1}^{*}\left ( A_2^{*}(0)w + \psi(0)\right)
 =\psi(-1)+A_2^{*}(-1)w \right \}.
\end{equation}
 $\sigma(\A^*)$ consists of   eigenvalues, roots of the equation $\det\Delta^{*}(\lambda) =0,$
where
\begin{equation}\label{Delta}
\Delta^*(\lambda) =
   \lambda I- \lambda {\mathrm e}^{-\lambda }A_{-1}^{*}-
  \int^0_{-1} {\mathrm e}^{\lambda s} \left  [ A_3^{*}(s) +
 \lambda A_2^{*}(s)\right ]{\mathrm d}s.
\end{equation}
\end{prop}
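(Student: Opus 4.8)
The plan is to compute $\A^*$ directly from the definition of the adjoint of an unbounded operator, and then to locate its spectrum via the transposed characteristic function. First I would take arbitrary $x=(v,\varphi)\in D(\A)$ and $y=(w,\psi)\in M_2$, write out $\langle \A x, y\rangle_{M_2} = \langle Lz_t, w\rangle_{\Rset^n} + \langle \varphi', \psi\rangle_{L_2}$, and substitute the explicit form of $L$ from \eqref{op:L}. This produces three terms: $\int_{-1}^0 \langle A_2(\theta)\varphi'(\theta), w\rangle\,\dd\theta$, $\int_{-1}^0 \langle A_3(\theta)\varphi(\theta), w\rangle\,\dd\theta$, and $\int_{-1}^0 \langle \varphi'(\theta), \psi(\theta)\rangle\,\dd\theta$. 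The first and third terms contain $\varphi'$; I would combine them as $\int_{-1}^0 \langle \varphi'(\theta), \psi(\theta) + A_2^*(\theta)w\rangle\,\dd\theta$ and integrate by parts. The boundary term $[\langle \varphi(\theta), \psi(\theta)+A_2^*(\theta)w\rangle]_{-1}^0$ then has to be rewritten using the constraint $v=\varphi(0)-A_{-1}\varphi(-1)$ that defines $D(\A)$, so as to isolate a term of the form $\langle v, (\cdot)\rangle_{\Rset^n}$ paired against the first component of $\A^* y$.

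The key bookkeeping step is to match the resulting expression against the pairing $\langle x, \A^* y\rangle_{M_2} = \langle v, (\A^*y)_1\rangle_{\Rset^n} + \langle \varphi, (\A^*y)_2\rangle_{L_2}$. Collecting the coefficient of $\varphi$ inside the integral forces $(\A^*y)_2(\theta) = -\frac{\dd}{\dd\theta}[\psi(\theta)+A_2^*(\theta)w] + A_3^*(\theta)w$, which is \eqref{adj1}; for this to lie in $L_2$ we need $\psi(\theta)+A_2^*(\theta)w\in H^1$, the first domain condition in \eqref{adj2}. Handling the boundary data requires care: after substituting $\varphi(0)=v+A_{-1}\varphi(-1)$ into $\langle \varphi(0), \psi(0)+A_2^*(0)w\rangle$, the term $\langle v, \psi(0)+A_2^*(0)w\rangle$ identifies $(\A^*y)_1 = A_2^*(0)w + \psi(0)$, while the leftover terms involving $\varphi(-1)$ — namely $\langle A_{-1}\varphi(-1), \psi(0)+A_2^*(0)w\rangle - \langle\varphi(-1), \psi(-1)+A_2^*(-1)w\rangle$ — must vanish for \emph{all} admissible $\varphi$, and since $\varphi(-1)$ ranges over all of $\Rset^n$ this yields exactly $A_{-1}^*(A_2^*(0)w+\psi(0)) = \psi(-1)+A_2^*(-1)w$, the second condition in \eqref{adj2}. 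I would then check the reverse inclusion — that any $(w,\psi)$ satisfying \eqref{adj2} indeed makes $x\mapsto\langle\A x,y\rangle$ bounded in $\|x\|_{M_2}$ — so that the domain is exactly as claimed and not merely contained in it.

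For the spectral statement, I would solve the resolvent equation $(\lambda I - \A^*)(w,\psi) = 0$, i.e. $\psi(\theta)$ satisfies the linear ODE obtained from \eqref{adj1} with $w$ as parameter; integrating gives $\psi(\theta) + A_2^*(\theta)w = \e^{-\lambda\theta}\bigl[(\psi(0)+A_2^*(0)w) - \int_0^\theta \e^{\lambda s}A_3^*(s)w\,\dd s\bigr]$ (up to sign conventions), with $(w,\psi)$ determined by $w$ alone once the first-component equation $\lambda w = A_2^*(0)w+\psi(0)$ and the boundary condition in \eqref{adj2} are imposed. Substituting $\theta=-1$ into the boundary condition and eliminating $\psi(0)$ collapses everything to $\Delta^*(\lambda)w = 0$ with $\Delta^*$ as in \eqref{Delta}, so $\lambda\in\sigma(\A^*)$ iff $\det\Delta^*(\lambda)=0$; that the spectrum is pure point follows because the resolvent is compact (the domain embeds compactly into $M_2$ via the $H^1$ component), which may be quoted. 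The main obstacle I anticipate is the boundary-term manipulation: one must substitute the $D(\A)$ constraint at the right moment and correctly separate the part that defines the finite-dimensional component $(\A^*y)_1$ from the part that must vanish identically and thereby produces the neutral-type constraint on $D(\A^*)$ — getting the $A_{-1}^*$ on the correct side and the arguments $0$ versus $-1$ consistent is where sign and indexing errors are most likely.
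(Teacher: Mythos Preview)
The paper does not supply its own proof of this proposition: it is simply recalled from \cite{Rabah_Sklyar_Rezounenko_2008}, so there is no argument in the present paper to compare yours against. That said, your proposal is the standard derivation and is essentially what one finds in the cited reference: pair $\langle \A x,y\rangle_{M_2}$, combine the two $\varphi'$ terms, integrate by parts, and use the constraint $v=\varphi(0)-A_{-1}\varphi(-1)$ to split the boundary contribution into the piece that identifies $(\A^*y)_1$ and the piece whose vanishing for arbitrary $\varphi(-1)\in\Rset^n$ forces the domain condition in \eqref{adj2}. Your treatment of the eigenvalue equation is likewise the expected one. One small caution on the spectral part: for neutral type systems the resolvent \emph{is} compact (your $H^1\hookrightarrow L_2$ argument is valid), but this is sometimes a point of confusion because the semigroup itself is not eventually compact and eigenvalues accumulate along vertical lines; it may be cleaner simply to exhibit the resolvent explicitly for $\det\Delta^*(\lambda)\neq 0$ and observe it is bounded, which directly shows the spectrum is exactly the zero set of $\det\Delta^*$.
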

The adjoint operator $\A^*$ in (\ref{adj1}) seems to be different from a state  operator generated by a
neutral type system. However, we can construct a  system of neutral type corresponding, in some sense,
to the given adjoint operator.
\begin{thm}
 Let $x$ be a solution of  the abstract equation
\begin{equation}\label{eq:2.0*}
 \dot x=\A^*x, \quad x(t)=
\begin{pmatrix}
w(t)
\cr \psi_t(\theta)
\end{pmatrix}.
\end{equation}
Then the function $w(t)$ is the solution of the neutral type equation
 \begin{equation}\label{eq:adj0}
 \dot w(t+1) = A_{-1}^*\dot w(t) + \int_{-1}^0 \left [A_2^*(\tau)\dot w(t+1+\tau) +
 A_3^*(\tau) w(t+1+\tau)\right ]\dd \tau.
\end{equation}
\end{thm}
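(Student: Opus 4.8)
The plan is to unfold the abstract equation $\dot x = \A^*x$ componentwise using the explicit form of $\A^*$ from Proposition~\ref{prop1}, and then to identify the resulting coupled system as a neutral type delay equation for $w$. Write $x(t) = \bigl(w(t),\psi_t(\cdot)\bigr)$ with $\psi_t(\theta)=\psi(t+\theta)$ for $\theta\in[-1,0]$; since $x(t)\in D(\A^*)$ for each $t$ (using that $S^*(t)$ maps $D(\A^*)$ into itself when $x_0\in D(\A^*)$), the boundary relation in (\ref{adj2}) holds for all $t$. First I would read off the first component of (\ref{adj1}): $\dot w(t) = A_2^*(0)w(t) + \psi(t)$, where I abbreviate $\psi(t):=\psi_t(0)=\psi(t+0)$. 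Next, the second component gives the transport-type equation
$$
\frac{\partial}{\partial t}\psi_t(\theta) = -\frac{\partial}{\partial\theta}\bigl[\psi_t(\theta)+A_2^*(\theta)w(t)\bigr] + A_3^*(\theta)w(t),
$$
and since $\partial_t \psi_t(\theta) = \partial_\theta \psi_t(\theta)$ along the characteristics of a shift, rearranging yields an identity relating the $\theta$-derivative of $\psi_t(\theta)+A_2^*(\theta)w(t)$ to $w(t)$.

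The key step is then to integrate this transport relation in $\theta$ from $-1$ to $0$ and use the two boundary conditions supplied by the domain $D(\A^*)$ to eliminate $\psi$ entirely in favour of $w$. Concretely, integrating $\partial_\theta\bigl[\psi_t(\theta)+A_2^*(\theta)w(t)\bigr]$ over $[-1,0]$ telescopes to $\bigl[\psi_t(0)+A_2^*(0)w(t)\bigr]-\bigl[\psi_t(-1)+A_2^*(-1)w(t)\bigr]$; the first bracket equals $\dot w(t)$ by the first-component equation, and the second bracket equals $A_{-1}^*\dot w(t)$ by the domain relation $A_{-1}^*\bigl(A_2^*(0)w+\psi(0)\bigr)=\psi(-1)+A_2^*(-1)w$ in (\ref{adj2}). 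To pick up the shift that produces a genuine neutral equation, I would evaluate the transport relation at the time argument $t+1+\tau$ (equivalently note $\psi_{t+1}(\tau)=\psi(t+1+\tau)=\psi_{t+1+\tau}(0)$) so that $\psi_t(\theta)$ with $\theta=-1+s$ corresponds to $w$ evaluated at shifted times; combining with $\dot w(t) = A_2^*(0)w(t)+\psi(t)$ to express $\psi$ through $\dot w$ and $w$, the integral $\int_{-1}^0\bigl[A_2^*(\tau)\dot w(t+1+\tau)+A_3^*(\tau)w(t+1+\tau)\bigr]\dd\tau$ emerges on the right-hand side, and the telescoped boundary terms assemble into $\dot w(t+1)-A_{-1}^*\dot w(t)$ on the left. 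This gives exactly (\ref{eq:adj0}).

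The main obstacle I anticipate is bookkeeping the time shift correctly: the transport equation relates $\partial_t$ and $\partial_\theta$ of $\psi_t(\theta)$, but the neutral equation (\ref{eq:adj0}) is stated with argument $t+1$, so one must be careful that $\psi_t(\theta)$ for $\theta\in[-1,0]$ encodes the history of $w$ on $[t-1,t]$ while the target equation wants $w$ on $[t, t+1]$; the clean way is to first establish, from $\dot w(t)=A_2^*(0)w(t)+\psi_t(0)$ and the transport equation, that $\psi_t(\theta)+A_2^*(\theta)w(t)$ as a function of $t+\theta$ is differentiable and its derivative in the combined variable is controlled, so that $\psi_t(\theta) = \dot w(t+\theta) - A_2^*(\theta)w(t+\theta)$ for $\theta$ in a suitable range (modulo the regularity of $w$). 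A secondary technical point is justifying the interchange of $\partial_t$ with evaluation and the $\theta$-integration, and handling the $L_2$ (rather than pointwise) regularity of $A_2, A_3$; since $\psi_t(\cdot)+A_2^*(\cdot)w(t)\in H^1$ by the domain condition, the telescoping integration is legitimate in the absolutely continuous sense, and the final identity (\ref{eq:adj0}) holds as an equality in $\Rset^n$ for a.e.\ $t$.
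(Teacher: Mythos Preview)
Your plan contains a structural error that undermines the argument. You write $\psi_t(\theta)=\psi(t+\theta)$ and then use $\partial_t\psi_t(\theta)=\partial_\theta\psi_t(\theta)$; this is the shift property of the \emph{original} semigroup $\ee^{\A t}$ (whose second component is $\partial_\theta$), but it is simply false for $\ee^{\A^*t}$. The second component of $\A^*$ in (\ref{adj1}) is $-\partial_\theta[\psi(\theta)+A_2^*(\theta)w]+A_3^*(\theta)w$, so $\psi_t(\cdot)$ does not evolve by translation, and there is no single function $\psi$ with $\psi_t(\theta)=\psi(t+\theta)$. If you substitute your relation into the transport identity you wrote, you get $2\partial_\theta\psi_t(\theta)=[-(A_2^*)'(\theta)+A_3^*(\theta)]w(t)$, which determines $\psi_t$ pointwise from $w(t)$ alone and cannot encode any history---a clear contradiction. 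For the same reason, the formula $\psi_t(\theta)=\dot w(t+\theta)-A_2^*(\theta)w(t+\theta)$ you suggest in the obstacle paragraph, and the identity $\psi_{t+1}(\tau)=\psi_{t+1+\tau}(0)$, are not valid.

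Your ``integrate in $\theta$ and telescope'' step also does not produce the target equation: integrating $\partial_\theta r$ (with $r=\psi_t+A_2^*w$) over $[-1,0]$ and using the PDE $\partial_t r+\partial_\theta r=A_2^*(\theta)\dot w(t)+A_3^*(\theta)w(t)$ gives $(I-A_{-1}^*)\dot w(t)$ on the left but only unshifted factors $\dot w(t)$, $w(t)$ and an extra term $-\partial_t\!\int_{-1}^0 r(t,\theta)\dd\theta$ on the right; the delayed arguments $w(t+1+\tau)$ never appear this way. The paper's route is the missing idea: set $r(t,\theta)=\psi_t(\theta)+A_2^*(\theta)w(t)$, observe that $r$ satisfies $\partial_t r+\partial_\theta r=A_2^*(\theta)\dot w(t)+A_3^*(\theta)w(t)$, and \emph{solve} this transport equation along its characteristics $t-\theta=\mathrm{const}$ (note the sign: the homogeneous solution is $f(t-\theta)$, not $f(t+\theta)$). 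Matching at $\theta=0$ with the first-component equation $\dot w(t)=r(t,0)$ identifies $f=\dot w$, yielding
\[
r(t,\theta)=\dot w(t-\theta)+\int_0^\theta\bigl[A_2^*(\tau)\dot w(t-\theta+\tau)+A_3^*(\tau)w(t-\theta+\tau)\bigr]\dd\tau.
\]
Now evaluate at $\theta=-1$ (this is where $t+1$ enters) and use the domain relation $r(t,-1)=A_{-1}^*r(t,0)=A_{-1}^*\dot w(t)$; equating the two expressions for $r(t,-1)$ gives (\ref{eq:adj0}) directly. No $\theta$-integration is needed---the boundary evaluation, not a telescoping integral, is what closes the argument.
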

\begin{proof}
Our purpose is to find the corresponding neutral type equation in $\Rset^n$.
Equation (\ref{eq:2.0*}) may be written as
$$
\frac{\partial}{\partial t}
\begin{pmatrix}
 w(t)\cr\psi_t(\theta) 
\end{pmatrix}=
\begin{pmatrix} A_2^{*}(0)w(t) + \psi_t (0) \cr
 -\frac{\partial[\psi_t(\theta)+A_2^{*}(\theta)w(t)]}{\partial\theta }
 +A_3^{*}(\theta)w(t)
\end{pmatrix}
$$
 Let us put $r(\theta)=A_2^*(\theta)w+\psi(\theta)$ and
\begin{equation}\label{aux1}
  r(t,\theta)=A_2^*(\theta)w(t)+\psi_t(\theta)=A_2^*(\theta)w(t)+\psi_t(\theta).
\end{equation}
Then the operator $\A^*$ may be rewritten as
\begin{equation}\label{eq:adj1}
{\mathcal A}^{*}
 \begin{pmatrix}
w \cr
r(\theta) -A_2^*(\theta)w
\end{pmatrix}
= \begin{pmatrix} r(0) \cr - \frac{\dd r(\theta)}{\dd \theta }
 +A_3^{*}(\theta)w
\end{pmatrix},
\end{equation}
and the differential equation $\dot x=\A^*x$ as  a system of two equations:
\begin{equation}\label{eq:adj2}
\frac{\partial}{\partial t}
\begin{pmatrix} w(t) \cr r(t,\theta) -A_2^*(\theta)w(t)
\end{pmatrix}
=\begin{pmatrix}
r(t,0) \cr -\frac{\partial r(t,\theta)}{\partial \theta} +A_3^{*}(\theta)w(t)
\end{pmatrix}.
\end{equation}
The second   equation of this system may be written as a partial differential equation:
\begin{equation}\label{eq:partial}
\frac{\partial}{\partial t}r(t,\theta)+\frac{\partial}{\partial \theta}r(t,\theta)=
A_2^*(\theta)\dot w(t)+A_3^{*}(\theta)w(t).
\end{equation}
The general solution of this    equation is
 \begin{equation}\label{sol:partial}
r(t,\theta)= f(t-\theta) +   \int_0^\theta \left [A_2^*(\tau)\dot w(t-\theta+\tau) + A_3^*(\tau) w(t-\theta+\tau)\right ]\dd \tau,
\end{equation}
where $f(t-\theta)$ is the solution of the homogeneous equation obtained from (\ref{eq:partial}):
$$
\frac{\partial}{\partial t}r(t,\theta)+\frac{\partial}{\partial \theta}r(t,\theta)=0.
$$
and the second term
is a particular solution of (\ref{eq:partial}).

The first  equation of the system (\ref{eq:adj2}) gives
\begin{equation}\label{eq:ode1}
\dot w(t)= r(t,0).
\end{equation}
From (\ref{sol:partial}) (obtained from the second equation), putting $\theta=0$, we get with (\ref{eq:ode1})
\begin{equation}\label{eq:ode2}
\dot w(t)= r(t,0)=f(t).
\end{equation}
Then (\ref{sol:partial}) and (\ref{eq:ode2}) allow    $r(t,\theta)$ to be written as follows:
\begin{equation}\label{sol:partial2}
 r(t,\theta)= \dot w(t-\theta) + \int_0^\theta \left [A_2^*(\tau)\dot w(t-\theta+\tau) + A_3^*(\tau) w(t-\theta+\tau)\right ]\dd \tau.
\end{equation}
From the definition of the domain $D(\A^*)$ we obtain  $A_{-1}^*r(0)=r(-1)$. 
For the function $r(t,\theta)$, this condition reads
\begin{equation}\label{sol:partial3}
r(t,-1)=A_{-1}^*r(t,0)=A_{-1}^*\dot w(t)
\end{equation}
and by (\ref{sol:partial2}) we have
 \begin{equation}\label{sol:partial4}
r(t,-1)= \dot w(t+1) -\int_{-1}^0 \left [A_2^*(\tau)\dot w(t+1+\tau) + A_3^*(\tau) w(t+1+\tau)\right ]\dd \tau.
\end{equation}
Finally, from (\ref{sol:partial3}) and (\ref{sol:partial4}), we obtain the dual equation
\begin{equation}\label{eq:adj}
\dot w(t+1) = A_{-1}^*\dot w(t) + \int_{-1}^0 \left [A_2^*(\tau)\dot w(t+1+\tau) +
 A_3^*(\tau) w(t+1+\tau)\right ]\dd \tau.
\end{equation}
On the other hand the solution of equation (\ref{eq:adj2}) is
\begin{equation}\label{sol:sg}
\ee^{\A^* t}x_0=\begin{pmatrix}
 w(t) \cr \psi_t(\theta)
\end{pmatrix} =
\begin{pmatrix}
w(t) \cr r(t,\theta) -A_2^*(\theta)w(t)
\end{pmatrix},
\end{equation}
where $w(t)$ is the solution of equation (\ref{eq:adj}). If $x_0 \in X$ then it is a mild solution.
\end{proof}
This result may also be  formulated, by simple duality (transposition),  in the following way.
\begin{thm}\label{tildeA}
Let $x$ be a solution of the abstract equation
$$
 \dot x=\tilde \A x, \quad x(t)=\begin{pmatrix}
w(t) \cr \psi_t(\theta)
\end{pmatrix},
$$
where the operator $ \tilde \A$ is defined by
$$
{\tilde \A}
\begin{pmatrix}
w \cr \psi(\cdot)
\end{pmatrix}
= \begin{pmatrix}
 A_2(0)w + \psi (0) \cr - \frac{\dd [ \psi(\theta)+A_2(\theta)w]}{\dd \theta }
  +A_3(\theta)w
\end{pmatrix},
$$
with the domain
$$
 D(\tilde {\A})=  \left\{ \left (w , \psi(\cdot)\right ) : \psi(\theta)+A_2(\theta)w \in H^1,
 \left( A_{-1} A_2(0)- A_2(-1) \right) w
 =\psi(-1)-A_{-1}\psi(0) \right\}.
$$
Then the function $w(t)$ is the solution of the neutral type equation
\begin{equation}\label{eq:1t}
\dot w(t+1) = A_{-1}\dot w(t) +
\int_{-1}^0 \left [A_2(\tau)\dot w(t+1+\tau) +
 A_3(\tau) w(t+1+\tau)\right ]\dd \tau.
\end{equation}
\end{thm}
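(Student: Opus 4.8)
The plan is to obtain Theorem~\ref{tildeA} as a direct transcription of the theorem just proved, with every adjoint matrix $A_{-1}^*,A_2^*,A_3^*$ replaced throughout by $A_{-1},A_2,A_3$ (i.e.\ by the transposition indicated in the statement). The key observation is that $\tilde\A$ is exactly the abstract adjoint, in the sense of Proposition~\ref{prop1}, of the state operator associated with the neutral type system (\ref{eq:1}) in which $A_{-1},A_2,A_3$ are replaced by their transposes $A_{-1}^*,A_2^*,A_3^*$. Indeed, inserting $(A_i^*)^*=A_i$ into (\ref{adj1}) reproduces precisely the formula defining $\tilde\A$, and inserting them into (\ref{adj2}) gives the scalar condition $A_{-1}(A_2(0)w+\psi(0))=\psi(-1)+A_2(-1)w$, which, after rearrangement, is exactly the domain relation $(A_{-1}A_2(0)-A_2(-1))w=\psi(-1)-A_{-1}\psi(0)$ defining $D(\tilde\A)$.

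Granting this identification, I would apply the preceding theorem to that transposed neutral system. It asserts that whenever $\dot x=\tilde\A x$ with $x(t)=(w(t),\psi_t(\cdot))^{\T}$, the component $w(t)$ solves the dual equation (\ref{eq:adj}) with $A_{-1}^*,A_2^*,A_3^*$ replaced by $(A_{-1}^*)^*,(A_2^*)^*,(A_3^*)^*=A_{-1},A_2,A_3$; this is exactly (\ref{eq:1t}). Hence Theorem~\ref{tildeA} follows with no further computation.

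As a self-contained check one may simply rerun the proof of the preceding theorem verbatim. Setting $r(t,\theta)=A_2(\theta)w(t)+\psi_t(\theta)$, the equation $\dot x=\tilde\A x$ splits into $\dot w(t)=r(t,0)$ and the transport equation $r_t(t,\theta)+r_\theta(t,\theta)=A_2(\theta)\dot w(t)+A_3(\theta)w(t)$, whose general solution is $r(t,\theta)=f(t-\theta)+\int_0^\theta[A_2(\tau)\dot w(t-\theta+\tau)+A_3(\tau)w(t-\theta+\tau)]\dd\tau$; evaluating at $\theta=0$ and using $\dot w(t)=r(t,0)$ gives $f=\dot w$. The domain relation of $\tilde\A$, rewritten, yields $r(t,-1)=A_2(-1)w(t)+\psi_t(-1)=A_{-1}(A_2(0)w(t)+\psi_t(0))=A_{-1}r(t,0)=A_{-1}\dot w(t)$, and equating this with the solution formula at $\theta=-1$ produces (\ref{eq:1t}). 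The only point that needs a word of justification — and it is not really an obstacle — is that every step of the original proof was either the method of characteristics for the transport PDE or the purely algebraic domain relation, none of which used that the $A_i^*$ were genuine adjoint matrices, so the substitution of $A_i$ for $A_i^*$ (equivalently, passing to the transposed system) is legitimate.
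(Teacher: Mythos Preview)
Your proposal is correct and is exactly the paper's own approach: the paper presents Theorem~\ref{tildeA} without a separate proof, introducing it with the remark that the preceding result ``may also be formulated, by simple duality (transposition),'' which is precisely your observation that $\tilde\A$ is the operator of Proposition~\ref{prop1} for the transposed system and that the previous theorem therefore applies verbatim with $A_{-1},A_2,A_3$ in place of $A_{-1}^*,A_2^*,A_3^*$.
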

\bigskip
Let us now specify  the relation between the solutions of neutral type equations
(\ref{eq:1t}) and (\ref{eq:1}).
Let us put
$$
\begin{pmatrix}
w(t) \cr \psi_t(\theta)
\end{pmatrix} =
 \e^{\tilde {\A} t}\tilde x_0=
\e^{\tilde {\A} t} \begin{pmatrix}
w(0) \cr
\psi_0(\theta)
\end{pmatrix},
$$
and
$$
\begin{pmatrix}
v(t) \cr z_t(\theta)
\end{pmatrix}
 =
\begin{pmatrix}
w(t+1)-A_{-1}w(t) \cr w(t+1+\theta)
\end{pmatrix}=
\e^{\tilde {\A} t}
\begin{pmatrix}
v(0) \cr z_0(\theta)\end{pmatrix}
=\e^{\tilde  {\A} t}\xi_0 ,
$$
where $z_0(\theta)= w(t+1)$ and $v(0)=z_0(0)-A_{-1}z_0(-1)$. Our purpose is to
give the explicit relation between the initial conditions $\tilde x_0$ and
$\xi_0$:
$$
\tilde x_0 = \begin{pmatrix}
w(0) \cr \psi_0(\theta)
\end{pmatrix}, \qquad \xi_0= \begin{pmatrix}
v(0) \cr z_0(\theta)
\end{pmatrix}.
$$
The formal  relation between these vectors is
$$
\tilde x_0 = \begin{pmatrix}
w(0) \cr \psi_0(\theta)
\end{pmatrix}= F\xi_0= F
\begin{pmatrix}
w(1)-A_{-1}w(0) \cr w(\theta+1)
\end{pmatrix}.
$$
\begin{thm}\label{initial}
The operator $F$ representing the relation between initial conditions $\tilde x_0$ and $\xi_0$
corresponding to the neutral type systems (\ref{eq:1}) and (\ref{eq:1t}) is linear bounded
and bounded invertible from $X_1$ to $M_2$.
\end{thm}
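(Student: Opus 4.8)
The plan is to make $F$ completely explicit and then read both assertions off that formula. First I would record a preliminary fact: on $X_1=D(\A)$ the graph norm $\|\cdot\|_1$ is equivalent to $\|z_0\|_{H^1}$, since $v(0)=z_0(0)-A_{-1}z_0(-1)$ and $\A\xi_0=(Lz_0,z_0')$ are all controlled by $\|z_0\|_{H^1}$ (using $H^1([-1,0];\Rset^n)\hookrightarrow C([-1,0];\Rset^n)$ and boundedness of $L$), whereas conversely $\|z_0\|_{H^1}^2=\|z_0\|_{L_2}^2+\|z_0'\|_{L_2}^2\le\|\xi_0\|_1^2$. Hence $\xi_0\in X_1$ is determined by its component $z_0$, and it suffices to study $F$ as a map on $z_0\in H^1([-1,0];\Rset^n)$.

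Next I would substitute the solution representation (\ref{sol:partial2}), written for the operator $\tilde\A$ of Theorem~\ref{tildeA} (that is, with $A_2,A_3$ in place of $A_2^*,A_3^*$), into the history component $\psi_0(\theta)=r(0,\theta)-A_2(\theta)w(0)$ of the initial state $\tilde x_0$. Since $z_0(\theta)=w(1+\theta)$ one has $w(u)=z_0(u-1)$, $\dot w(u)=z_0'(u-1)$ on $[0,1]$, whence $w(0)=z_0(-1)$ and
\begin{equation*}
\psi_0(\theta)=z_0'(-\theta-1)+\int_0^{\theta}\!\big[A_2(\tau)z_0'(-\theta+\tau-1)+A_3(\tau)z_0(-\theta+\tau-1)\big]\dd\tau-A_2(\theta)z_0(-1),
\end{equation*}
for $\theta\in[-1,0]$. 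From this, boundedness of $F:X_1\to M_2$ is immediate: $\theta\mapsto z_0'(-\theta-1)$ is an $L_2$-isometry, the integral term is bounded pointwise in $\theta$ by $\|A_2\|_{L_2}\|z_0'\|_{L_2}+\|A_3\|_{L_2}\|z_0\|_{L_2}$ by Cauchy--Schwarz (this is where one uses only $A_2,A_3\in L_2$), hence lies in $L_\infty\subset L_2$, and $\|A_2(\cdot)z_0(-1)\|_{L_2}=\|A_2\|_{L_2}|z_0(-1)|$; with the embedding bound for $z_0(-1)$ this yields $\|F\xi_0\|_{M_2}\le C\|z_0\|_{H^1}\asymp\|\xi_0\|_1$.

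For bounded invertibility I would take an arbitrary $(w_0,\psi_0)\in M_2$, impose $F\xi_0=(w_0,\psi_0)$ (so $z_0(-1)=w_0$), set $g:=z_0'$ with $z_0(\cdot)=w_0+\int_{-1}^{\cdot}g$, and perform the changes of variable $s=-\theta-1$, $\sigma=s+\tau$ together with one interchange of the order of integration in the displayed formula. This rewrites the identity for $\psi_0$ as the Volterra equation of the second kind
\begin{equation*}
(I-V)g=\tilde h,\qquad (Vg)(s)=\int_{-1}^{s}\Big(A_2(\sigma-s)+\int_{\sigma}^{s}A_3(\tau-s)\dd\tau\Big)g(\sigma)\,\dd\sigma ,
\end{equation*}
with $\tilde h\in L_2(-1,0;\Rset^n)$ built from $\psi_0,w_0$ and $\|\tilde h\|_{L_2}\le C\|(w_0,\psi_0)\|_{M_2}$. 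The (matrix) kernel of $V$ is supported on $\{-1\le\sigma\le s\le0\}$ and square integrable there (the $A_2$-part because $\int_{-1}^{0}\!\int_{-1}^{s}|A_2(\sigma-s)|^2\dd\sigma\,\dd s\le\|A_2\|_{L_2}^2$, the $A_3$-part because $\big|\int_{\sigma}^{s}A_3(\tau-s)\dd\tau\big|\le\|A_3\|_{L_2}$), so $V$ is a Hilbert--Schmidt Volterra operator, hence quasinilpotent, and $I-V$ is boundedly invertible on $L_2(-1,0;\Rset^n)$. Therefore $g=(I-V)^{-1}\tilde h$ is the unique solution, $\|g\|_{L_2}\le C\|(w_0,\psi_0)\|_{M_2}$, so $z_0=w_0+\int_{-1}^{\cdot}g\in H^1$ with the same bound; putting $v(0):=z_0(0)-A_{-1}z_0(-1)$ gives $\xi_0\in D(\A)=X_1$ with $\|\xi_0\|_1\le C\|(w_0,\psi_0)\|_{M_2}$, and reversing the substitutions shows $F\xi_0=(w_0,\psi_0)$. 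This proves surjectivity together with a bounded inverse, while injectivity is the uniqueness part of the Volterra equation ($F\xi_0=0\Rightarrow\tilde h=0\Rightarrow g=0\Rightarrow\xi_0=0$). Hence $F$ is a Banach space isomorphism of $X_1$ onto $M_2$.

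\noindent\textbf{Main obstacle.} The many shifts and substitutions reducing $F$ to the clean Volterra form above are routine bookkeeping. The delicate points are: (i) observing that the reduced equation is causal, so that quasinilpotency of Hilbert--Schmidt Volterra operators gives a genuinely \emph{bounded} inverse and not merely injectivity; and (ii) verifying that this inversion \emph{gains a derivative}, i.e.\ yields $z_0\in H^1$ rather than only $z_0\in L_2$, which is exactly what puts the image in $D(\A)$ and makes $\|\cdot\|_1$ the correct norm on the source. A subsidiary caveat is that $A_2,A_3$ lie only in $L_2$, so all pointwise estimates have to pass through Cauchy--Schwarz rather than sup-norm bounds.
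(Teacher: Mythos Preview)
Your argument is correct and follows essentially the same route as the paper. Both proofs start from the representation (\ref{sol:partial2}) (written for $\tilde\A$), isolate the derivative $\dot w$ (your $g=z_0'$), and reduce the question to the bounded invertibility of $I\pm V$ for a Volterra operator with $L_2$ kernel, concluding by quasinilpotency; the paper packages this as a factorization $F=R\circ Q\circ P$ through a commutative diagram (with $P:X_1\to M_2$ the ``take the derivative'' isomorphism and $Q=I\oplus(I+\V)$), whereas you carry out the same computation in one pass, but the mathematical content is identical.
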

\begin{proof}
Let us calculate the explicit expression for the linear operator $F$. From
(\ref{sol:partial2}) and (\ref{aux1}),  taking in account that we consider here the operator $\tilde \A$ instead
of $\A^*$,  we obtain
\begin{equation}\label{eq:xi}
\begin{array}{rcll}
  r(0,\theta)&= &\psi_0(\theta)+A_2(\theta)w(0)\\[2pt]
    &= &\dot w(-\theta)+\int_{0}^\theta \left [A_2(\tau)\dot w( \tau-\theta) +
 A_3(\tau) w(\tau-\theta)\right ]\dd \tau,
 \end{array}
\end{equation}
which can be written as
$$
 r(0,\theta)= \dot w(-\theta)+\int_{0}^{\theta} \left [A_2(\theta -s)\dot w(-s) +
 A_3(\theta-s) w(-s)\right ]\dd s.
$$
Putting $w(-s)=\int_0^s \dot w(-\sigma)\dd \sigma + w(0)$, we get
$$
\hskip -22em
r(0,\theta)-\int_{0}^\theta A_3(\theta-s)\dd s \cdot w(0)
$$
\begin{equation}\label{eq:xi0}%
=\dot w(-\theta) +\int_{0}^\theta \left [A_2(\theta -s)\dot w(-s) +
 A_3(\theta-s) \int_0^s \dot w(-\sigma)\dd \sigma \right ]\dd s.
\end{equation}
This may be represented by the expression
\begin{equation}\label{eq:xi1}
r(0,\theta)-\int_{0}^{\theta}A_3(\theta-s)\dd s \cdot w(0)=(I+\V) \dot w (-s),
\end{equation}
where $\V$ is the Volterra operator defined by
$$
\V\mu(\cdot)=\int_{0}^\theta \left [A_2(\theta -s) \mu(s) +
A_3(\theta-s)\int_0^s \mu(\sigma)\dd \sigma\right ]\dd s.
$$
The operator $\V$ is a compact linear operator from
$L_2(-1,0;\Rset^n)$ to $L_2(-1,0;\Rset^n)$
with a spectrum $\sigma(\V)=\{0\}$. This implies that the operator
$I+\V$ is bounded invertible on $L_2(-1,0;\Rset^n)$.

Let us now represent the operator $F$ as a composition of operators
according to the following commutative diagram
\begin{displaymath}
    \xymatrix{
{\begin{pmatrix}
v(0) \cr z_0(\theta)\end{pmatrix}}  \ar[d]_F  & \ar[r]^P & & \quad
{\begin{pmatrix}
w(0) \cr \dot w(-\theta)
\end{pmatrix}} \ar[d]^{Q} \\
{\begin{pmatrix}
w(0) \cr \psi_0(\theta)
\end{pmatrix}}     &  &\ar[l]^{R} &
\quad {\begin{pmatrix}
w(0) \cr (I+\V)\dot w(-s)
\end{pmatrix}}
}
\end{displaymath}

\medskip

\noindent where, as explained  above,
$\left(\begin{array}{c} v(0) \cr z_0(\theta)\end{array}\right)
 = \left(
\begin{array}{c}
w(1)-A_{-1}w(0) \cr w(\theta+1)\end{array}\right)$
and
$$
(I+\V)\dot w(-s)=r(0,\theta)+\int_{0}^{-\theta}A_3(\theta+s)\dd s \cdot w(0),
$$
where $r(0,\theta)$ is given as in (\ref{eq:xi}).
The operators $P :X_1 \longrightarrow M_2 $ and $R: M_2 \longrightarrow M_2$ are bounded invertible.
Moreover, as $I+\V: L_2(-1,0;\Rset^n) \longrightarrow  L_2 (-1,0;\Rset^n)$ is bounded invertible,
then $Q:M_2 \longrightarrow  M_2$ is  also bounded invertible.
\end{proof}
We also need the following property of the bounded operator $F^{-1}$.
\begin{prop}\label{prop}
 For $\lambda \ne \sigma(\tilde \A)$, the operator
$$
F^{-a}=F^{-1}(\lambda I -\tilde \A)
$$
can be extended
to a bounded (and bounded invertible) operator from $M_2$ to $M_2$.
\end{prop}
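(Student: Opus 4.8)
The plan is to show that $F^{-a}$ coincides on its natural domain $D(\tilde\A)$ with the operator $(\lambda I-\A)F^{-1}$, and that this operator is already bounded and bounded invertible on all of $M_2$; it will then be the extension asserted in the statement. The boundedness half is immediate from Theorem~\ref{initial}: $F^{-1}$ is a bounded bijection $M_2\to X_1=D(\A)$, the operator $\lambda I-\A$ is bounded from $(X_1,\|\cdot\|_1)$ into $M_2$ (since $\|(\lambda I-\A)x\|_{M_2}\le(|\lambda|+1)\|x\|_1$), and for $\lambda\in\rho(\A)$ it is in fact a bounded bijection onto $M_2$ with bounded inverse $R(\lambda,\A):M_2\to X_1$; hence $(\lambda I-\A)F^{-1}:M_2\to M_2$ is bounded with bounded inverse $FR(\lambda,\A)$. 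One should also remark that the hypothesis $\lambda\notin\sigma(\tilde\A)$ does entail $\lambda\in\rho(\A)$: both $\sigma(\A)$ and $\sigma(\tilde\A)$ consist of the roots of $\det\bigl(\lambda I-\lambda\e^{-\lambda}A_{-1}-\int_{-1}^0\e^{\lambda s}(A_3(s)+\lambda A_2(s))\dd s\bigr)=0$, the computation for $\tilde\A$ being the same one that yields $\sigma(\A^*)$ in Proposition~\ref{prop1}.

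So the real content is the operator identity $F^{-1}(\lambda I-\tilde\A)=(\lambda I-\A)F^{-1}$ on $D(\tilde\A)$. The relation between the solutions of (\ref{eq:1}) and (\ref{eq:1t}) set up just before Theorem~\ref{initial} says precisely that $F$ intertwines the two semigroups, $F\e^{\A t}=\e^{\tilde\A t}F$ on $X_1$, which $\e^{\A t}$ leaves invariant. First I would differentiate this identity at $t=0$ along a vector $\xi_0\in D(\A^2)$: for such $\xi_0$ the orbit $t\mapsto\e^{\A t}\xi_0$ is $C^1$ in the graph norm, and $F$ being bounded from $X_1$ into $M_2$ one concludes that $F\xi_0\in D(\tilde\A)$ and $\tilde\A F\xi_0=F\A\xi_0$. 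Applying this with $\xi_0=R(\lambda,\A)x$ ($x\in X_1$) gives $FR(\lambda,\A)=R(\lambda,\tilde\A)F$ on $X_1$, and composing with $F^{-1}$ on each side yields $R(\lambda,\A)F^{-1}=F^{-1}R(\lambda,\tilde\A)$ on all of $M_2$. Then, for $\tilde x_0\in D(\tilde\A)$, put $\eta=F^{-1}(\lambda I-\tilde\A)\tilde x_0\in X_1$; the last identity gives $F^{-1}\tilde x_0=R(\lambda,\A)\eta$, so $\xi_0:=F^{-1}\tilde x_0\in D(\A^2)$ and $\tilde x_0=F\xi_0$, whence
$$
F^{-1}(\lambda I-\tilde\A)\tilde x_0=F^{-1}\bigl(F(\lambda I-\A)\xi_0\bigr)=(\lambda I-\A)F^{-1}\tilde x_0,
$$
the first equality using $(\lambda I-\tilde\A)F\xi_0=F(\lambda I-\A)\xi_0$, i.e. the intertwining.

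Since $D(\tilde\A)$ is dense in $M_2$ and $F^{-a}$ agrees there with the bounded and bounded invertible operator $(\lambda I-\A)F^{-1}$, that operator is the required extension. I expect the one genuinely delicate point to be the domain bookkeeping in the middle paragraph — in particular the claim that $F$ carries $D(\A^2)$ onto $D(\tilde\A)$ — for which the resolvent form $R(\lambda,\A)F^{-1}=F^{-1}R(\lambda,\tilde\A)$ of the intertwining is the convenient tool; once the operator identity is established, boundedness and bounded invertibility follow directly from Theorem~\ref{initial}.
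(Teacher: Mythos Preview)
Your argument is correct and takes a genuinely different route from the paper. The paper proves the proposition by brute force: it restricts to the codimension--$n$ subspace $D_0=\{(0,\psi)\in D(\tilde\A)\}$, decomposes $F^{-a}=P^{-1}Q^{-1}R^{-1}(\lambda I-\tilde\A)$ according to the diagram in Theorem~\ref{initial}, and then solves explicitly for the map $\psi_0\mapsto w(1+\cdot)$ by inverting a Volterra equation. This produces the bounded operator $\mathcal W=(-I+\V_2)^{-1}(I+\V_1)$ and, after a further argument that the ``first component'' functional $G$ is bounded from $L_2$ to $\Rset^n$, boundedness of $F^{-a}$ on $L_0$; a finite--codimension comparison then extends $F^{-a}$ to all of $M_2$.

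Your approach bypasses all of this by recognising that the construction preceding Theorem~\ref{initial} encodes the semigroup intertwining $F\ee^{\A t}=\ee^{\tilde\A t}F$ on $X_1$: both semigroups represent the \emph{same} neutral trajectory $w(\cdot)$, the $\A$--state packaging $w|_{[t,t+1]}$ and the $\tilde\A$--state packaging $(w(t),\psi_t)$ via formula~(\ref{sol:partial2}), with $F$ the change of coordinates. Once this is said, differentiating gives $F\A=\tilde\A F$ on $D(\A^2)$, hence the resolvent relation $R(\lambda,\A)F^{-1}=F^{-1}R(\lambda,\tilde\A)$ on $M_2$, and therefore the clean identity
\[
F^{-1}(\lambda I-\tilde\A)=(\lambda I-\A)F^{-1}\quad\text{on }D(\tilde\A),
\]
whose right--hand side is manifestly bounded and boundedly invertible by Theorem~\ref{initial} together with $\lambda\in\rho(\A)=\rho(\tilde\A)$. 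Your domain bookkeeping (in particular the passage through $D(\A^2)$ and the use of the standard fact that differentiability of $t\mapsto\ee^{\tilde\A t}x$ at $0$ forces $x\in D(\tilde\A)$) is correct.

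What each approach buys: yours is shorter, structural, and makes the proposition an immediate corollary of Theorem~\ref{initial} plus the intertwining; it also identifies the extension explicitly as $(\lambda I-\A)F^{-1}$, with inverse $FR(\lambda,\A)$. The paper's computation, while heavier, yields a concrete formula for $F^{-a}$ in terms of the Volterra operators $\V_1,\V_2$ and the map $\mathcal W$, which could be of independent use. One small caveat: the paper's display just before Theorem~\ref{initial} writes $\ee^{\tilde\A t}\xi_0$ where $\ee^{\A t}\xi_0$ is meant, so your reading of that passage as an intertwining statement is the intended one but is not spelled out verbatim; it is, however, justified by (\ref{sol:partial2})--(\ref{sol:sg}) and Theorem~\ref{tildeA}.
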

\begin{proof}
 We need to prove that
\begin{equation}\label{F*}
\|F^{-1}(\lambda I -\tilde \A)\tilde x_0\| \le C \|\tilde x_0\|, \qquad \tilde x_0 \in D(\tilde \A), C >0,
 \end{equation}
where $\|\cdot\|$ is the initial norm in $M_2$. Let $L_0$ and $D_0$ be the subspaces
$$
L_0 = \{(0, \psi(\cdot)) : \psi(\cdot) \in L_2(-1,0;\Rset^n)\}, \qquad D_0 = L_0 \cap D(\tilde \A).
$$
It is clear that $D_0$ is of finite co-dimension $n$, and this implies that it is enough to prove
the relation (\ref{F*}) for $\tilde x_0 \in D_0$. Let $\tilde x_0 = (0, \psi_0(\cdot)) \in D_0$.
The action of the operator $F^{-a}=F^{-1}(\lambda I -\tilde \A)$ may be decomposed according to the
following diagram
\medskip
$$
\begin{matrix}
\begin{pmatrix}
 0 \cr \psi_0(\cdot)
\end{pmatrix}\quad
\stackrel{(\lambda I -\tilde \A)}{\longrightarrow}
\quad
\begin{pmatrix}
 -\psi_0(0) \cr
\lambda \psi_0(\cdot) +\dot \psi_0(\cdot)
\end{pmatrix}\ \
 \stackrel{R^{-1}}{\longrightarrow}\
\begin{pmatrix}
 -\psi_0(0) \cr
\boldsymbol{\Psi_0}(\cdot)
\end{pmatrix}
\cr \cr
\downarrow^{F^{-a}} \hskip 17em \downarrow ^{Q^{-1}} \cr%
%
\end{matrix}
$$
\begin{displaymath} 
    \xymatrix{
{\begin{pmatrix}
w(1)- A_{-1}w(0)\cr w(\theta +1)
\end{pmatrix} }& &
 \ar[l]^{P^{-1}}& & & \hskip - 7em 
{\begin{pmatrix}
\psi_0(0) \cr \dot w(-\theta)
\end{pmatrix}}
}
\end{displaymath}
where
 $$
\boldsymbol{\Psi_0}(\theta)= \lambda \psi_0(\cdot)+\dot\psi_0(\cdot)+ \int_0\limits^{-\theta} A_3(\theta +s) \dd s \cdot \psi_0(0),
$$
and  the function $w(\cdot)$ is determinated from the
equation (obtained from (\ref{eq:xi})):
\begin{eqnarray*}
{\lambda \psi_0(\theta) +\dot\psi_0(\theta) - A_2(\theta)\psi_0(0)=}
\dot w(-\theta) + \int_{0}^{\theta} \left [A_2(\tau)\dot w(\tau-\theta) +
 A_3(\tau) w(\tau-\theta)\right ]\dd \tau,
\end{eqnarray*}
with the initial condition $w(0)=-\psi_0(0)$.
Integrating the last equation from $0$ to $-1-\theta$ and taking in account the initial condition, we obtain
\begin{eqnarray*}
\lefteqn{\psi_0(-1-\theta)+\lambda \int_0^{-1-\theta}\psi_0(\tau)\dd \tau - \int_0^{-1-\theta}A_2(\tau)\dd \tau\cdot\psi_0(0)
=}\cr
&& -w(1+\theta) + \int_0^{-1-\theta} \left (\int_0^\tau A_2(s)\dot w(s-\tau) +
 A_3(s) w(s-\tau)\dd s\right)\dd \tau.
\end{eqnarray*}
Using a transformation in the double integration and the initial condition, we get
\begin{eqnarray*}
\lefteqn{
\psi_0(-1-\theta)+\lambda \int_0^{-1-\theta}\psi_0(\tau)\dd \tau = }\\&&
 -w(1+\theta) + \int_0^{-1-\theta} \left( A_2(\tau) w(1+\theta+\tau)+\int_0^\tau A_3(s)w(s-\tau)\dd s \right )\dd \tau,
\end{eqnarray*}
and this can be written as
$
(I+\V_1) \psi_0(-1-\theta)=(-I+\V_2) w(1+\theta), \quad \theta \in [-1,0],
$
where $\V_1$ and $\V_2$ are Volterra operators from $L_2(-1,0;\Rset^n)$ to $L_2(-1,0;\Rset^n)$.
Both operators have spectra concentrated at  $\{0\}$. Then
$$
w(1+\theta) =(-I+\V_2)^{-1}(I+\V_1)\psi_0(-1-\theta)=\mathcal{W} \psi_0(-1-\theta),
$$
where $\mathcal{W}$ is a bounded invertible operator on $L_2(-1,0;\Rset^n)$.
This enables the final
expression for the operator $F^{-a}=F^{-1}(\lambda I -\tilde \A)$ on the set $D_0$ to be obtained:
 $$
F^{-a}
\begin{pmatrix}
 0 \cr \psi_0(\theta)
\end{pmatrix}
=
\begin{pmatrix}
\left.\mathcal{W} \psi_0(-1-\theta)\right |_{\theta=0}-
\left.A_{-1}\mathcal{W} \psi_0(-1-\theta)\right |_{\theta=-1} \cr   \mathcal{W} \psi_0(-1-\theta)
\end{pmatrix}
=
\begin{pmatrix}
w(1)-A_{-1}w(0) \cr   w(1+\theta)
\end{pmatrix}.
$$
Taking in account
$A_{-1}w(0)=-A_{-1} \psi_0(0)=- \psi_0(-1),$
we can rewrite
$$
\left. \mathcal{W} \psi_0(-1-\theta)\right|_{\theta=0}- \left. A_{-1}\mathcal{W} \psi_0(-1-\theta)\right|_{\theta=-1}
=\left.\mathcal{W}\psi_0(-1-\theta)\right |_{\theta=0} + \psi_0(-1).
$$
As    a Volterra operator is quasinilpotent, we can write:  $\left(-I+\V_2 \right)^{-1}= -\sum_{k=0}^\infty \V_2^k$ and
$\mathcal{W}=  -\sum_{k=0}^\infty \V_2^k \left(I+\V_1 \right)$. This gives
\begin{eqnarray*}
\mathcal{W}\psi_0(-1-\theta)|_{\theta=0} + 
\psi_0(-1)&=&\left.-\left(\sum_{k=1}^\infty \V_2^k(I+\V_1) + \V_1 \right)\psi_0(-1-\theta)\right|_{\theta=0} \cr
&=& G \psi_0(-1-\theta),
\end{eqnarray*}
and $G$ is a linear operator from a dense set of $L_2(-1,0;\mathbb{R}^n)$ to $\mathbb{R}^n$. 
Since equalities
$$
\begin{array}{l}
\left. \V_1 \varphi (-1-\theta)\right |_{\theta=0} = -\int\limits_{-1}^0 \varphi(\tau)\: d{\rm \tau},\cr
\left. \V_2 \varphi (-1-\theta)\right |_{\theta=0} = -\int\limits_{-1}^0
\left(A_2(\tau) \varphi(-1-\tau) \int\limits_{0}^\tau A_2(s) \varphi(\tau-s)\: d{\rm s} \right)\: d{\rm \tau}
\end{array}
$$
define bounded operators from $L_2(-1,0;\mathbb{R}^n)$ to $\mathbb{R}^n$, we conclude that the
operator $G$ can be  extended by continuity to a   linear bounded operator from $L_2(-1,0;\mathbb{R}^n)$ to $\mathbb{R}^n$.
As a consequence, the  operator $F^{-a}$ is extended to a bounded (in the norm of $M_2$) operator, defined on $L_0$ by the formula
\begin{equation}\label{eq:Fa01}
F^{-a}
\begin{pmatrix}
 0 \cr \psi(\theta)
\end{pmatrix}
=
\begin{pmatrix}
G \psi(-1-\theta) \cr   \mathcal{W} \psi(-1-\theta)
\end{pmatrix}.
\end{equation}
Let us observe that the subspace $L_0$ as well as its image $F^{-a} L_0$
have codimension  $n$ in the space $M_2$
 (a complement subspace for them is $\Rset^n\times \{0\} \subset M_2$).
Moreover, the mapping $L_0\mathop{\longrightarrow}\limits^{F^{-a}} F^{-a}L_0$ is bijective.
On the other hand,  the subspace  $D_0$ has codimension equals $n$ in $D(\tilde \A)$,
$F^{-a}$ is defined on $D(\tilde \A)$ by the formula
\begin{equation}\label{eq:Fa02}
F^{-a}=F^{-1}(\lambda I - \tilde \A),
\end{equation}
and the mapping $D(\tilde \A)\mathop{\longrightarrow}\limits^{F^{-a}} D(\tilde \A)$ is also bijective.
Besides, 
$
D(\tilde \A)+L_0=D(\tilde \A)+F^{-a}L_0=M_2.
$
Comparing all these facts, we conclude that the operator $F^{-a}$, given by formulas~(\ref{eq:Fa01}), (\ref{eq:Fa02}),
can be extended to a  bounded bijective operator on $M_2$.
\end{proof}
A direct consequence of this proposition is the following corollary.
\begin{cor}\label{cor1}
 For all $x \in D( \A)$, for $\lambda \ne \sigma(\tilde{\A})$,
we have
$$
c\|x\| \le \left\|(\lambda I-\tilde \A)^{-1}F x\right \| \le C \| x \|,
$$
where $\Vert \cdot \Vert$ is the norm of the space $M_2$.
\end{cor}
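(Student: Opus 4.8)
The plan is to read the estimate off directly from Proposition~\ref{prop}. By that proposition the operator $F^{-a}=F^{-1}(\lambda I-\tilde\A)$, originally defined on $D(\tilde\A)$, extends to a bounded and bounded invertible operator on $M_2$; hence there are constants $c_1,C_1>0$ with
$$
c_1\left\|z\right\|\le\left\|F^{-a}z\right\|\le C_1\left\|z\right\|,\qquad z\in M_2,
$$
where $\left\|\cdot\right\|$ is the $M_2$-norm. So the whole task reduces to evaluating $F^{-a}$ on the vector $z=(\lambda I-\tilde\A)^{-1}Fx$ and checking that it returns $x$.

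First I would note that for $x\in D(\A)$ the vector $Fx$ lies in $M_2$ (Theorem~\ref{initial}), and since $\lambda\notin\sigma(\tilde\A)$ the resolvent $(\lambda I-\tilde\A)^{-1}$ is bounded on $M_2$ and maps $M_2$ into $D(\tilde\A)$. Therefore $z=(\lambda I-\tilde\A)^{-1}Fx\in D(\tilde\A)$, on which the extended operator $F^{-a}$ coincides with the literal expression $F^{-1}(\lambda I-\tilde\A)$. Consequently
$$
F^{-a}z=F^{-1}(\lambda I-\tilde\A)(\lambda I-\tilde\A)^{-1}Fx=F^{-1}Fx=x,
$$
the last equality holding because $F:X_1=D(\A)\to M_2$ is a bijection (Theorem~\ref{initial}) and $x\in D(\A)$. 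Substituting $z=(\lambda I-\tilde\A)^{-1}Fx$ and $F^{-a}z=x$ into the two-sided bound above yields
$$
c_1\left\|(\lambda I-\tilde\A)^{-1}Fx\right\|\le\left\|x\right\|\le C_1\left\|(\lambda I-\tilde\A)^{-1}Fx\right\|,
$$
which is exactly the claim with $c=C_1^{-1}$ and $C=c_1^{-1}$.

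There is essentially no obstacle here: the statement is a formal corollary of Proposition~\ref{prop}. The only point deserving a word of care is the legitimacy of applying the literal formula $F^{-a}=F^{-1}(\lambda I-\tilde\A)$ to $z$; this is valid precisely because $z\in D(\tilde\A)$, so one never needs the extension of $F^{-a}$ beyond $D(\tilde\A)$ within the computation. The role of the extension (and hence of Proposition~\ref{prop} rather than merely Theorem~\ref{initial}) is to guarantee that the two constants are uniform as $x$ ranges over $D(\A)$ equipped with the $M_2$-norm rather than the graph norm, which is exactly the form of estimate needed in the subsequent duality argument.
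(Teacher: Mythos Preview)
Your argument is correct and is precisely the ``direct consequence'' the paper has in mind: the paper gives no separate proof of the corollary, merely records it as immediate from Proposition~\ref{prop}. Your explicit verification that $F^{-a}\bigl((\lambda I-\tilde\A)^{-1}Fx\bigr)=x$ for $x\in D(\A)$, together with the two-sided bound from bounded invertibility of $F^{-a}$ on $M_2$, is exactly the intended reading.
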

\section{The control system and duality}\label{sec:duality}%
Consider the controlled neutral type system
\begin{equation}\label{eq:1c}
\dot z(t) = A_{-1}\dot z(t-1) +Lz_t + Bu(t),
\end{equation}
where $u(t) \in L_2(0,T;\Rset^m)$ is a $m$-dimensional control vector-function.
This system may be represented  by an operator model in Hilbert space
given by the equation
 \begin{equation}\label{eq:2c}
   \dot x= {\mathcal A} x + \B u(t), \quad x(t)=
\begin{pmatrix}
v(t)\cr z_t(\cdot)
\end{pmatrix},
\end{equation}
where $\B u=(Bu,0)$ is linear and bounded from $\Rset^{m}$ to $M_2$. We can note that $\B$ is not
bounded from $M_2$ to $X_1$   because $\B u=(Bu,0) \notin D(\A)$ if
$Bu \neq 0$.
\subsection{Exact controllability}\label{sec:excontr}
Let us denote by $R_T \subset M_2$ the reachable subspace of the system (\ref{eq:2c}):
$$
R_T= \left \{ \R_Tu(\cdot)=\int_0^T \e^{\A t}\B u(t) \dd t:  u(t)  \in L_2(0,T;\Rset^m)\right \},
$$
where $\R_T: L_2 \longrightarrow   M_2$ is a linear bounded operator.
As  was pointed out in \cite{Ito_Tarn_1985} and \cite{Rabah_Sklyar_2007}, $R_T \subset D(\A)$ for all $T>0$.
This implies that exact controllability may be defined as follows.
\begin{defn}
 The system (\ref{eq:2c}) is exactly controllable if $R_T=D(\A)$.
\end{defn}
The abstract condition of exact controllability is (see \cite{Rudin_1975} for example)
\begin{equation}\label{crit-c}
\int_0^T\Vert \B^*\e^{\A^* t}x  \Vert^2_{\Rset^m} \dd t  \ge \delta^2 \Vert x \Vert^2_{X_{-1}^\dd}, \qquad \forall x_0 \in D(\A^*),
\end{equation}
which means that the operator $\R_T: L_2 \longrightarrow X_1$ is onto. Here the space $X_{-1}^\dd$
is the completion of the space $X=M_2$ with respect to the norm
$$
\|x\|_{X_{-1}^\dd}=\left \Vert(\lambda I-\A^*)^{-1}x \right\Vert_{M_2}, \quad \lambda \notin \sigma(\A^*).
$$
For the system (\ref{eq:1c}) the condition of exact controllability is given by the following theorem
(see \cite{Rabah_Sklyar_2007}).
\begin{thm}\label{th:contr}
 The system (\ref{eq:1c}) is exactly controllable at time $T$ if and only if, for all $\lambda \in \Cset$, the following
two conditions are verified
\begin{enumerate}
 \item[i)]

$\mathrm{rank}\left (
\lambda I-\lambda\ee^{-\lambda} A_{-1}-\int_{-1}^0\ee^{\lambda s}\left[\lambda A_2(s)+A_3(s) \right]\dd s
\quad  B\right )= n$,
\item[ii)]
$\mathrm{rank}\left (\lambda I-A_{-1}\quad  B\right )= n$.
\end{enumerate}
The time of controllability is $T>n_1(A_{-1}, B)$.
\end{thm}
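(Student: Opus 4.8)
The plan is to derive the theorem from the abstract controllability criterion \eqref{crit-c}, i.e.\ from the requirement that $\R_T\colon L_2\to X_1$ be onto, after a spectral decomposition of $M_2$ into $\A$-invariant subspaces that converts exact controllability ($R_T=D(\A)$) into a countable family of vectorial moment problems; the solvability of those problems is then governed by Riesz-basis properties of exponential families in $L_2(0,T)$, which is where both rank conditions and the time bound originate.

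First I would record the spectral picture of $\A$: its spectrum is discrete and equal to the zero set of $\det\Delta(\lambda)$ with $\Delta(\lambda)=\lambda I-\lambda\e^{-\lambda}A_{-1}-\int_{-1}^0\e^{\lambda s}[\lambda A_2(s)+A_3(s)]\,\dd s$, and for large $|\lambda|$ the eigenvalues accumulate in vertical strips around the points $\ln\mu_j+2\pi\ii\ell$, $\ell\in\Zset$, where the $\mu_j$ are the nonzero eigenvalues of $A_{-1}$ (so $\real\lambda\to\ln|\mu_j|$). Using the generalized Riesz basis property of the (generalized) eigenspaces, $M_2$ decomposes as the closure of $\bigoplus_j M_2^j$ with each $M_2^j$ an $\A$-invariant subspace collecting one high-frequency cluster (finitely many low eigenvalues being lumped into one extra block); the same decomposition holds for $\A^*$ and, with the graph norm, for $X_1=D(\A)$ and $X_{-1}^\dd$. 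Hence \eqref{crit-c} splits componentwise and exact controllability reduces to exact controllability of each block.

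\textbf{Necessity of the rank conditions.} If condition (i) fails at some $\lambda_0$, pick $q\ne0$ with $q^*\Delta(\lambda_0)=0$ and $q^*B=0$; lifting $q$ to an eigenvector $x_0$ of $\A^*$ gives $\B^*\e^{\A^* t}x_0=\e^{\bar\lambda_0 t}B^*x_0=0$, so \eqref{crit-c} cannot hold and $R_T\ne D(\A)$. Condition (ii) is the obstruction carried by the neutral term: a failure of $\mathrm{rank}(\lambda I-A_{-1}\ \ B)=n$ can only occur at an eigenvalue $\mu$ of $A_{-1}$ and produces a common left annihilator orthogonal to $B$ for the whole cluster of $\A^*$-eigenvectors near $\ln\mu+2\pi\ii\ell$; since those eigenvalues have bounded real part (the neutral-type phenomenon, absent for retarded systems) the corresponding exponential sums cannot satisfy the moment conditions in the $X_{-1}^\dd$-norm, so exact controllability again fails.

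\textbf{Sufficiency and the time estimate.} Assuming (i)--(ii), reduce $R_T=D(\A)$ to solving, for every target whose coordinates lie in the $\ell_2$-type sequence space dual to the Riesz basis of $D(\A)$, moment problems of the form $\int_0^T t^{\,s}\e^{\bar\lambda_k t}\,(q_k^*B)\,u(t)\,\dd t=c_{k,s}$, the polynomial factors accounting for Jordan chains. The rank conditions make the scalar weights $q_k^*B$ nonzero and uniformly bounded below along each cluster, so solvability is equivalent to the family $\{t^{\,s}\e^{\bar\lambda_k t}\}$ being an $L_2(0,T)$-Riesz basis of its closed span with the correct excess; by the classical theory of exponential families (Paley--Wiener/Levinson-type completeness and minimality, together with the precise count of eigenvalues per unit length in each vertical strip) this holds precisely when $T$ exceeds a critical value $n_1(A_{-1},B)$ fixed by the Jordan structure of $A_{-1}$ relative to $B$ --- concretely by the controllability indices of the pair $(A_{-1},B)$, which under (ii) is controllable. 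The main obstacle is this high-frequency analysis: establishing the generalized Riesz basis property is delicate because $\A$ fails to be Riesz-spectral in the usual sense once $A_{-1}$ has repeated or non-semisimple eigenvalues, and pinning down $n_1(A_{-1},B)$ (and the strictness of the bound $T>n_1$) requires tracking how the Jordan blocks of $A_{-1}$ and their coupling with $B$ determine the asymptotic density of the exponentials that must form a basis in $L_2(0,T)$, while the Hautus-type necessity is comparatively routine.
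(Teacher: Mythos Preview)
The paper does not prove this theorem at all: it is quoted verbatim from \cite{Rabah_Sklyar_2007} and used as a tool for the observability results. So there is no ``paper's own proof'' to compare against; what you have written is an outline of a proof for a result that the present paper takes as given.

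That said, your outline is faithful to the strategy actually carried out in \cite{Rabah_Sklyar_2007}: spectral analysis of $\A$, grouping of the high-frequency eigenvalues into clusters around $\ln\mu_j+2\pi\ii\ell$, a generalized Riesz basis of root subspaces in $M_2$, reduction of $R_T=D(\A)$ to a vectorial moment problem, and identification of the critical time with the controllability index of $(A_{-1},B)$ via Riesz-basis criteria for exponential families. The Hautus-type necessity of (i) is indeed routine; the necessity of (ii) and the sharp time bound are the hard part, exactly as you say.

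Where your sketch is thin is precisely at those hard points. For necessity of (ii) you speak of a ``common left annihilator orthogonal to $B$ for the whole cluster'' and conclude that ``the corresponding exponential sums cannot satisfy the moment conditions in the $X_{-1}^\dd$-norm''; this hides the actual computation, which in \cite{Rabah_Sklyar_2007} requires quantitative asymptotics of eigenvectors (not only eigenvalues) along the cluster and a careful comparison of the moment-problem norms with the graph norm of $\A$. Likewise, your sufficiency paragraph asserts that solvability of the moment problem ``holds precisely when $T$ exceeds $n_1(A_{-1},B)$'' by ``the classical theory of exponential families''; but the exponents here are not separated and can have unbounded multiplicity within a cluster, so one cannot invoke Paley--Wiener/Levinson results off the shelf --- one needs the tailored basis results for grouped exponentials with polynomial weights that form the technical core of \cite{Rabah_Sklyar_2007}. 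As an outline your text is on target; as a proof it would need those two pieces filled in.
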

\medskip
The integer $n_1(A_{-1}, B)$ is the controllability index of
the pair $(A_{-1}, B)$ (see \cite{wonham_1985}). If the delay is $h$, then the critical time is $n_1h$.

Let us now consider the dual notion of observability for the adjoint system.
The condition (\ref{crit-c}) is equivalent to the exact observability of the observed system
\begin{equation}\label{eq:2o}
 \left \{
\begin{array}{rcl}
 \dot x & =&\A^* x, \cr
y & = & \B^* x
\end{array}
\right.
\end{equation}
and the corresponding neutral type system is the system (\ref{eq:adj0}). Then the conditions (i)--(ii)
of Theorem~\ref{th:contr} are necessary and sufficient for the exact controllability of the adjoint system
(\ref{eq:2o}). But what is the corresponding property for the associate neutral type system (\ref{eq:adj})?
This question will be investigated  in the following paragraph.
\subsection{Duality}
Consider the transposed controlled neutral type system
\begin{equation}\label{eq:1c*}
\dot z(t) = A_{-1}^*\dot z(t-1) +L^*z_t + C^*u(t),
\end{equation}
where $L^*f= \int_{-1}^0A_2^*(\theta)f'(\theta)+A_3^*(\theta)f(\theta)\dd \theta$. Let $\A^\dag$
be the generator of the semigroup $\ee^{\A^{\dag} t}$
generated by this equation (\ref{eq:1c*}). We cannot consider $\A^*$ for this system because this operator
does not correspond directly to this system as  infinitesimal generator of the semigroup of solutions.
The domain $D({\A}^{\dag})$ of the
operator $\A^{\dag}$ is given by
$$
\left \{
\begin{pmatrix}
 v \cr z(\cdot)
\end{pmatrix} : z\in
   H^1([-1,0];\Cset^n), v=z(0)-A_{-1}^*z(-1)\right\}.
$$
The spectrum of $\A^\dag$ is $\sigma(\A^\dag)= \{\lambda : \Delta^*(\lambda)=0\}= \sigma(\A^*)$.
Let $X_1^\dag$ be $D({\mathcal A}^{\dag})$ with the norm
$$
\|x\|_{X_1^\dag}= \left \Vert(\lambda I-\A^\dag)x \right\Vert_{M_2}, \quad \lambda \notin \sigma(\A^\dag),
$$
which is equivalent to the graph norm. Consider now the reachability operator for this system
$$
\R_T^\dag u(t)= \int_0^T \e^{\A^\dag t}
\begin{pmatrix}
 C^* \cr 0
\end{pmatrix}
u(t) \dd t.
$$
From the properties of the operator $\R_T$, we can deduce that $\R_T^\dag$ is linear,
bounded from $L_2(0,T;\Rset^)$ to $X_1^\dag$. The exact controllability
for the system (\ref{eq:1c*}) can be formulated as
$$
R_T^\dag=\mathrm{Im}\,\R_T^\dag= X_1^\dag.
$$
The conditions of exact controllability for this system (\ref{eq:1c*}) may be obtained
directly from Theorem~\ref{th:contr}.

Let us now consider the corresponding space $X_{-1}^\dag$ of linear functionals on $X_1^\dag$
as the completion of the space $X=M_2$ with respect to the norm
$$
\|x\|_{X_{-1}^\dag}= \left \Vert(\lambda I-\A^\dag)^{-1}x\right \Vert_{M_2}, \quad \lambda \notin \sigma(\A^\dag).
$$
We then have  the embedding
\begin{equation}\label{embed+}
X_1^\dag \subset X=M_2 \subset X_{-1}^\dag.
\end{equation}
Then, for $x \in X_1^\dag$ and $y \in X_{-1}^\dag$, the functional acts as
\begin{equation}\label{embed+*}
\left \langle x,y  \right \rangle_{X_1^\dag, X_{-1}^{\dag \dd }} =
\left \langle  (\lambda I-\A^\dag)^{-1}x, (\lambda I-\A^{\dag *})^{-1} y  \right \rangle_{X },
\end{equation}
where $\A^{\dag *}$ is the adjoint of the operator $\A^{\dag}$ in $M_2$, and
the space $X_{-1}^{\dag \dd}$ is constructed as $X_{-1}^{\dag \dd }$ with
$\A^{\dag *}$ instead of $\A^{\dag}$ (see  \cite{Tucsnak_Weiss_2009} for example).

Let us note that the operator $\A^{\dag *}$ is in fact the operator $\tilde \A$
defined in Section~\ref{sec:adjsys} (see Theorem~\ref{tildeA} and later). We shall use
the properties obtained for this operator.

Let us now consider the adjoint $\R_T^{\dag *}$ of $\R_T^{\dag}$ with respect to the duality
induced by $X_1^\dag$ and  $X_{-1}^{\dag \dd }$ with the pivot space $X=M_2$.
Let $x_0 \in X$, then
$$
\begin{array}{lcl}
\left \langle \R_T^\dag u(\cdot),x_0 \right \rangle_{X_{1},X_{-1}^\dd}
&=& \left \langle \R_T^\dag u(\cdot),x_0 \right \rangle_{X} \cr
&=& \left \langle \displaystyle \int_0^T \ee^{\A^\dag t}  \begin{pmatrix} C^* \cr 0\end{pmatrix} u(t) \dd t
, x_0\right \rangle_{X}\\
&=& \displaystyle \int_0^T \left \langle    \begin{pmatrix} C^* \cr 0\end{pmatrix} u(t)
,   \ee^{\A^{\dag *} t}x_0\right \rangle_{X} \dd t.
\end{array}
$$
Suppose now that $x_0 =
\begin{pmatrix}
    w(0) \cr \psi_0(\theta)
\end{pmatrix}
\in D(\A^{\dag *})$.

Then, as a consequence of the
results in Section~\ref{sec:adjsys}, namely
from (\ref{sol:sg}) but for the operator $\A^{\dag *}=\tilde \A$, we obtain
$$
\ee^{\A^{\dag *} t} x_0=
\begin{pmatrix}
w(t) \cr \psi_t(\theta)
\end{pmatrix}
=
\begin{pmatrix}
w(t) \cr r(t,\theta -A_2(\theta)w(t)
\end{pmatrix}
, \quad t \ge 0.
$$
Hence,
$$
\begin{array}{rcl}
\left \langle \R_T^{\dag} u(\cdot), x_0 \right \rangle_{X}
&= & \int_0^T \left \langle u(t), Cw(t)\right \rangle_{\Rset^p} \dd t\\
 &= & \left \langle u(\cdot),\R_T^{\dag *}x_0 \right \rangle_{L_2}.
\end{array}
$$
On the other hand, we can write $x_0=F \xi_0$ (cf. Proposition~\ref{prop}), where
$$
\xi_0= \begin{pmatrix}
v(0) \cr z_0(\theta)
\end{pmatrix}
= \begin{pmatrix}
z_0(0)-A_{-1}z_0(-1)\cr z_0(\theta)
\end{pmatrix}
=
\begin{pmatrix}
w(1)-A_{-1}w(0)\cr w(\theta+1)
\end{pmatrix},
$$
and then
$$
\e^{\A t}\xi_0=\begin{pmatrix}
w(t+1)-A_{-1}w(t)\cr w(t+1+\theta)
\end{pmatrix}.
$$
Let $\K$ be the output operator introduced in Definition~\ref{def1}. Then
\begin{eqnarray*}
\lefteqn{\left \langle u(\cdot), \K\xi_0 \right \rangle_{L_2} =
\left \langle u(\cdot), \C \e^{\A t} \xi_0 \right \rangle_{L_2}= }\\
&&
\left \{
\begin{array}{{lcl}}
\int_0^T \left \langle u(t), Cw(t)\right \rangle_{\Rset^p} \dd t & \mbox{if}  & \C x(t)= Cz(t-1)\cr
\int_0^T \left \langle u(t), Cw(t+1)\right \rangle_{\Rset^p} \dd t  & \mbox{if}  & \C x(t)= Cz(t)
\end{array},
\right.
\end{eqnarray*}
which implies for all $x_0 \in X$:
\begin{equation}\label{dua-K}
\K F^{-1} x_0=
\left \{
\begin{array}{lcl}
\R_T^{\dag *}x_0 & \ \mbox{if} & \C x(t)= Cz(t-1),\cr
\e^{\A^{\dag *}}\R_T^{\dag *}x_0 & \ \mbox{if} &\C x(t)= Cz(t).
\end{array}
\right.
 \end{equation}
 \addtolength{\textheight}{-20ex}   


We can now formulate our main result on duality between exact controllability and exact observability.
\begin{thm}\label{th:dual}
1. The system~(\ref{eq:1}) with the output
$$
y(t) = \C x(t)= Cz(t-1)
$$
is exactly observable in the interval $[0,T]$, i.e.
$$
\Vert\K x_0\Vert_{L_2}^2 = \int_0^T\left\|\C \e^{\A t}x_0\right\|_{\Rset^p}^2\dd t \ge \delta^2\|x_0\|_{M_2}^2
$$
if and only if the adjoint system~(\ref{eq:1c*}) is exactly controllable at time $T$, i.e.
$$
R_T^\dag = \R_T^\dag \left(L_2(0,T;\Rset^p)\right)= X_1^\dag = D(\A^\dag).
$$
2.  If  $\, \det A_{-1} \ne 0$,  Assertion 1 of the theorem is verified for the output
$$
y(t) = \C x(t)= Cz(t),
$$
and for the same time $T$.
\end{thm}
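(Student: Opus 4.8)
The plan is to read off both equivalences from the duality identity~(\ref{dua-K}), combined with the mapping properties of $F$ established in Theorem~\ref{initial} and of $F^{-a}=F^{-1}(\lambda I-\tilde\A)$ established in Proposition~\ref{prop} and Corollary~\ref{cor1}, together with the standard Hilbert space fact that a bounded operator between Hilbert spaces is onto if and only if its adjoint is bounded below. First I would rewrite exact controllability of the adjoint system~(\ref{eq:1c*}) in dual form: since $\R_T^\dag:L_2(0,T;\Rset^p)\to X_1^\dag$ is bounded and $X_1^\dag=D(\A^\dag)$, the equality $R_T^\dag=X_1^\dag$ is equivalent, exactly as for~(\ref{crit-c}), to the estimate
$$
\|\R_T^{\dag *}x_0\|_{L_2}^2\ \ge\ \delta^2\|x_0\|_{X_{-1}^{\dag\dd}}^2,\qquad x_0\in X=M_2 ,
$$
where, by the remark following~(\ref{embed+*}), $\A^{\dag *}=\tilde\A$, so that $\|x_0\|_{X_{-1}^{\dag\dd}}=\|(\lambda I-\tilde\A)^{-1}x_0\|_{M_2}$.

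For Assertion~1 (output $Cz(t-1)$) the identity~(\ref{dua-K}) reduces to $\K F^{-1}x_0=\R_T^{\dag *}x_0$ for every $x_0\in M_2$. Writing $x=F^{-1}x_0$, which by Theorem~\ref{initial} runs bijectively over $D(\A)$ as $x_0$ runs over $M_2$, this reads $\|\K x\|_{L_2}=\|\R_T^{\dag *}Fx\|_{L_2}$. Corollary~\ref{cor1} gives $c\|x\|_{M_2}\le\|(\lambda I-\tilde\A)^{-1}Fx\|_{M_2}\le C\|x\|_{M_2}$, i.e. $\|Fx\|_{X_{-1}^{\dag\dd}}$ is two-sided comparable to $\|x\|_{M_2}$. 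Substituting $x_0=Fx$ in the displayed controllability estimate therefore shows that the adjoint system is exactly controllable in time $T$ if and only if $\|\K x\|_{L_2}\ge\delta'\|x\|_{M_2}$ for all $x\in D(\A)$; since $\C$ is admissible, $\K$ extends to a bounded operator defined on all of $M_2$, and, $D(\A)$ being dense, the latter estimate is precisely the exact observability inequality of the statement. No time shift occurs in this branch of~(\ref{dua-K}), so the two times coincide.

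For Assertion~2 (output $Cz(t)$) the identity~(\ref{dua-K}) carries an extra shift by one: $\K F^{-1}x_0$ equals the function $t\mapsto Cw(t+1)$, that is $\R_T^{\dag *}$ applied to $\e^{\tilde\A}x_0=\e^{\A^{\dag *}}x_0$ rather than to $x_0$. The hypothesis $\det A_{-1}\ne0$ enters exactly here: it makes the transposed neutral equation~(\ref{eq:1t}) solvable backward in time (from~(\ref{eq:1t}), $\dot w$ on $[-1,0]$ is recovered from its values on $[0,1]$ once $A_{-1}$ is inverted), so that $\e^{\tilde\A}$ is a bounded and boundedly invertible operator on $M_2$ (in fact $\e^{\tilde\A t}$ then extends to a $C_0$-group on $M_2$). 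Since $\e^{\tilde\A}$ commutes with $(\lambda I-\tilde\A)^{-1}$, it is also an isomorphism of $X_{-1}^{\dag\dd}$; hence the controllability estimate $\|\R_T^{\dag *}z\|_{L_2}\ge\delta\|z\|_{X_{-1}^{\dag\dd}}$, $z\in M_2$, holds if and only if $\|\R_T^{\dag *}\e^{\tilde\A}x_0\|_{L_2}\ge\delta''\|x_0\|_{X_{-1}^{\dag\dd}}$ for all $x_0\in M_2$ (put $z=\e^{\tilde\A}x_0$, which again runs bijectively over $M_2$). The equivalence chain of Assertion~1 then reproduces itself with $\e^{\tilde\A}Fx$ in place of $Fx$, yielding exact observability for the same time $T$.

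The density and extension arguments and the bookkeeping of the three nested spaces $X_1^\dag\subset M_2\subset X_{-1}^{\dag\dd}$ are routine. The one genuinely substantial step is the claim used for Assertion~2, that $\det A_{-1}\ne0$ forces $\e^{\tilde\A}$ — equivalently the shift-by-one map on solutions of~(\ref{eq:1t}) — to be boundedly invertible on $M_2$: one must establish backward solvability of the transposed neutral equation with invertible neutral coefficient and check that the resulting bounds do not spoil the two-sided estimates inherited from Corollary~\ref{cor1}.
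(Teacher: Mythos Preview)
Your proposal is correct and follows essentially the same route as the paper: both start from the abstract surjectivity criterion $\|\R_T^{\dag *}x_0\|_{L_2}\ge\delta\|(\lambda I-\tilde\A)^{-1}x_0\|_{M_2}$, substitute $x_0=F\xi_0$ via~(\ref{dua-K}), and use the two-sided estimate of Corollary~\ref{cor1} to pass between the resolvent norm of $F\xi_0$ and the $M_2$-norm of $\xi_0$, extending to all of $M_2$ by density. For Assertion~2 the paper is terser than you are --- it simply asserts that $\det A_{-1}\ne0$ is equivalent to $\e^{\A t}$ being a group, without spelling out the commutation with the resolvent that makes $\e^{\tilde\A}$ an isomorphism in the $X_{-1}^{\dag\dd}$ norm as well; your reading of~(\ref{dua-K}) in that case as $\K F^{-1}x_0=\R_T^{\dag *}(\e^{\tilde\A}x_0)$ is in fact the correct one (the paper's displayed order $\e^{\A^{\dag *}}\R_T^{\dag *}x_0$ is a slip, since $\R_T^{\dag *}x_0\in L_2$).
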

\begin{proof}
Let us recall that the exact controllability of the system~(\ref{eq:1c*}) may be formulated by the equality
$\mathrm{Im}\,\R_T^\dag= X_1^\dag$. Then, taking in account the embedding (\ref{embed+}) and the duality product
(\ref{embed+*}), we can write the condition of exact controllability as  (see  \cite{Rudin_1975} for example):
\begin{equation}\label{crit-c1}
\| \R_T^{\dag *}x_0\|_{L_2}    \ge \delta   \left \|(\lambda I-\A^{\dag *})^{-1} x_0 \right\|_{M_2}
, \qquad \forall x_0 \in X.
\end{equation}
Let $\xi_0=F^{-1}x_0 \in D(\A)$, where $F:D(\A) \rightarrow M_2$ is the bounded invertible operator
defined in Section~\ref{sec:adjsys}. Let us remember that we have, from  (\ref{dua-K}):
$$
\R_T^{\dag *}x_0 =\K F^{-1}x_0= \K\xi_0.
$$
Then the inequality (\ref{crit-c1}) is equivalent to
\begin{equation}\label{crit-c2}
\| \K \xi_0\|_{L_2}   \ge \delta   \left \|(\lambda I-\A^{\dag *})^{-1} F \xi_0\right\|_{M_2}
, \qquad  \xi_0\in D(\A).
\end{equation}
Suppose now that the relation (\ref{crit-c2}) is verified for all $\xi_0\in D(\A)$, then
from Corollary~\ref{cor1} we obtain
$$
\| \K \xi_0\|_{L_2}   \ge \delta   \left \|(\lambda I-\A^{\dag *})^{-1} F \xi_0\right\|_{M_2}
\ge \underbrace{\delta c}_{= \delta_1} \|\xi_0\|_{M_2}, \ \xi_0\in D(\A).
$$
This inequality can be extended by continuity to $\xi_0 \in M_2$:
$$
\| \K \xi_0\|_{L_2}
\ge   \delta_1 \|\xi_0\|_{M_2}, \quad \forall  \xi_0 \in M_2
$$
Conversely, suppose that the preceding relation is verified. For $\xi_0 \in D(\A)$,
and from Corollary~\ref{cor1}, we get
$$
\|\xi_0\| \ge \frac{1}{C} \left \| \left(\lambda I-\A^{\dag *}\right)^{-1} F \xi_0\right\|_{M_2} ,
$$
and then
$$
\| \K \xi_0\|_{L_2} \ge \frac{\delta_1}{C} \left \|\left(\lambda I-\A^{\dag *}\right)^{-1} F \xi_0\right\|_{M_2}.
$$
This is the relation (\ref{crit-c2}) with $\delta=\delta_1/C$. As the relations
(\ref{crit-c1}) and  (\ref{crit-c2}) are equivalent, the first assertion of the theorem is proved.\\
To prove item 2 of the theorem, it is sufficient to remark that the condition
$\det A_{-1} \ne 0$ is equivalent to the fact that the operator $\e^{\A}$ is bounded invertible
($\e^{\A t}$ is a group), and then the relations
(\ref{crit-c1}) and  (\ref{crit-c2}) are equivalent.
\end{proof}
From this result and from Theorem~\ref{th:contr}
we can formulate the condition of exact observability.
\begin{thm}\label{th:exobs}
 1. The system~(\ref{eq:1})  with the output $y=Cz(t-1)$ is exactly observable over $[0,T]$ if and only if
\begin{enumerate}
\item[(i)] For all $\lambda \in \Cset$, $\mathrm{rank}\begin{pmatrix} \Delta^*(\lambda) & C^* \end{pmatrix}=n$,
where $\Delta^*(\lambda)$
is defined
in (\ref{Delta}).
\item[(ii)] For all $\lambda \in \Cset$, $\mathrm{rank}\begin{pmatrix} \lambda I - A_{-1}^* & C^* \end{pmatrix}=n$,
\item[(iii)] $T > n_1(A_{-1}^*, C^*)$, where $n_1$ is the 
index of controllability for the pair $(A_{-1}^*, C^*)$.
\end{enumerate}
2. If $\det A_{-1} \ne 0$, then  Assertion 1 is verified for the output $y(t)=Cz(t)$.
\end{thm}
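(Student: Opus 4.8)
The plan is to combine the duality result of Theorem~\ref{th:dual} with the exact controllability criterion of Theorem~\ref{th:contr}, applied to the transposed controlled system~(\ref{eq:1c*}). First I would observe that by Theorem~\ref{th:dual}, assertion~1, the system~(\ref{eq:1}) with output $y(t)=Cz(t-1)$ is exactly observable on $[0,T]$ if and only if the adjoint system~(\ref{eq:1c*}),
$$
\dot z(t) = A_{-1}^*\dot z(t-1) + L^*z_t + C^*u(t),
$$
is exactly controllable at time $T$. So the whole problem reduces to writing down the exact controllability conditions for~(\ref{eq:1c*}).

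Next I would apply Theorem~\ref{th:contr} to the system~(\ref{eq:1c*}). This system has the same structure as~(\ref{eq:1c}) but with $A_{-1}$ replaced by $A_{-1}^*$, with $A_2,A_3$ replaced by $A_2^*,A_3^*$, and with the control matrix $B$ replaced by $C^*$. Hence condition (i) of Theorem~\ref{th:contr} becomes
$$
\mathrm{rank}\left(\lambda I - \lambda\ee^{-\lambda}A_{-1}^* - \int_{-1}^0 \ee^{\lambda s}\left[\lambda A_2^*(s)+A_3^*(s)\right]\dd s \quad C^*\right)=n,
$$
and the matrix appearing here is precisely $\Delta^*(\lambda)$ as defined in~(\ref{Delta}); this gives condition~(i). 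Likewise condition (ii) of Theorem~\ref{th:contr} becomes $\mathrm{rank}\,(\lambda I - A_{-1}^* \ \ C^*)=n$, which is condition~(ii). Finally the time bound $T>n_1(A_{-1},B)$ of Theorem~\ref{th:contr} becomes $T>n_1(A_{-1}^*,C^*)$, which is condition~(iii). For assertion~2, I would simply note that when $\det A_{-1}\ne 0$ the hypothesis of assertion~2 of Theorem~\ref{th:dual} is met, so the same equivalence holds verbatim for the output $y(t)=Cz(t)$, with the same time $T$; the controllability conditions are unchanged since they involve only $A_{-1}^*$, $\Delta^*$, and $C^*$.

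The only genuinely non-routine point is the identification of the matrix in condition (i) of Theorem~\ref{th:contr}, read for the transposed system, with the matrix $\Delta^*(\lambda)$ of~(\ref{Delta}); this is a direct term-by-term comparison, using that the entries of $A_2^*,A_3^*$ lie in $L_2(-1,0)$ so the integrals make sense, and that the characteristic matrix transforms under transposition exactly as the coefficient matrices do. Everything else is bookkeeping: one must only be careful that Theorem~\ref{th:dual} delivers exact controllability of~(\ref{eq:1c*}) in the space $X_1^\dag=D(\A^\dag)$ (not in $M_2$), which is exactly the setting in which Theorem~\ref{th:contr} is stated, so the two results compose without any loss. I expect no real obstacle here; the substantive work was already done in Sections~\ref{sec:adjsys} and~\ref{sec:duality}.
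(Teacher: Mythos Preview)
Your proposal is correct and follows exactly the approach indicated in the paper, which simply states that Theorem~\ref{th:exobs} follows from Theorem~\ref{th:dual} combined with Theorem~\ref{th:contr} applied to the transposed system~(\ref{eq:1c*}). The paper does not spell out the substitution and identification with $\Delta^*(\lambda)$ in any more detail than you do; your write-up is, if anything, more explicit than the original.
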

\section{Approximate controllability and observability}
 Let us now formulate the result on approximate controllability and observability.
For more general duality relations between approximate controllability and observability of neutral type, we refer to the book
\cite{Salamon_Pitman_1984}. We give here a precise formulation in the light of our results on adjoint systems.
Let us first recall the definition
of approximate controllability.
\begin{defn}
The system~(\ref{eq:2c}) is approximately controllable at time $T$ if $\mathrm{cl\,}R_T=M_2$, where   $\mathrm{cl\,}R_T$ is
the closure of the attainable set $R_T$ at time $T$.
\end{defn}
Sometimes approximate controllability is defined as
$$
\mathrm{cl\,}\bigcup_{T>0}R_T=M_2,
$$
 however for neutral type systems this notion of approximate controllability (as exact controllability) means that there is an universal time
of controllability $T_0>0$  (see \cite{Salamon_Pitman_1984}), i.e.
such that:
$$
\mathrm{cl\,}R_{T_0}=\mathrm{cl\,}\bigcup_{T>0}R_T.
$$
  According to the relation (\ref{dua-K}) and the definition of observability we obtain the following result on duality
between approximate controllability and observability.
\begin{thm}\label{thm:appr_obs}
1. The system~(\ref{eq:1}) with the output
$
y(t) = \C x(t)= Cz(t-1)
$
is approximately observable in the interval $[0,T]$, i.e. $\K =\{0\}$
 if and only if the adjoint system~(\ref{eq:1c*}) is approximately  controllable at time $T$, i.e.
$
\mathrm{cl\,}R_T^\dag = M_2.
$
\\
2.  If  $\, \det A_{-1} \ne 0$,  Assertion 1 of the theorem is verified for the output
$
y(t) = \C x(t)= Cz(t),
$
and for the same time $T$.
\end{thm}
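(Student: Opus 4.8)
The plan is to read the statement off the duality identity \eqref{dua-K}, using only the elementary fact that density of a range is dual to triviality of the adjoint's kernel. First I would translate approximate controllability of the adjoint system into a kernel condition: since $\langle\R_T^\dag u,x\rangle_{M_2}=\langle u,\R_T^{\dag *}x\rangle_{L_2}$ for every $x\in X=M_2$ (this is precisely the pairing computed in Section~\ref{sec:duality}), a vector $x\in M_2$ annihilates $R_T^\dag$ iff $\R_T^{\dag *}x=0$; hence (cf. \cite{Rudin_1975}) the adjoint system \eqref{eq:1c*} is approximately controllable on $[0,T]$, i.e. $\mathrm{cl\,}R_T^\dag=M_2$, if and only if $\ker\R_T^{\dag *}=\{0\}$, where $\R_T^{\dag *}\colon M_2\to L_2(0,T;\Rset^p)$ is the bounded operator constructed there.

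Next I would substitute \eqref{dua-K}. For the output $y(t)=Cz(t-1)$ it says $\R_T^{\dag *}=\K F^{-1}$ on all of $M_2$, where $F\colon X_1\to M_2$ is the bounded, boundedly invertible operator of Theorem~\ref{initial}; as $F^{-1}$ is injective, $\ker\R_T^{\dag *}=\ker(\K F^{-1})=F(\ker\K\cap X_1)$, so $\ker\R_T^{\dag *}=\{0\}$ iff $\ker\K\cap X_1=\{0\}$. Invoking the topology-independence of approximate observability stressed after Definition~\ref{def1}, this is the same as $\ker\K=\{0\}$ in $M_2$, i.e. approximate observability of \eqref{eq:1} with $y=Cz(t-1)$ on $[0,T]$; chaining these equivalences gives Assertion~1. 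For Assertion~2 I would use the other branch of \eqref{dua-K}: as in the proof of Theorem~\ref{th:dual}, the hypothesis $\det A_{-1}\ne 0$ makes the semigroup $\e^{\A^{\dag *}t}$ a group, so $\e^{\A^{\dag *}}$ is boundedly invertible on $M_2$ and $\K F^{-1}=\e^{\A^{\dag *}}\R_T^{\dag *}$ has the same kernel as $\R_T^{\dag *}$; the previous argument then applies verbatim and yields approximate observability with $y=Cz(t)$ on the same interval $[0,T]$.

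The only point needing care is the identification of $\ker\K$ computed on $X_1=D(\A)$ with $\ker\K$ computed on $M_2$ — that is, the insensitivity of approximate observability to the topology of the initial data; granting this, together with the bounded invertibility of $F$ (Theorem~\ref{initial}) and, in the second case, of $\e^{\A^{\dag *}}$, the remainder is the routine range/kernel duality. In particular no quantitative estimate enters, so Corollary~\ref{cor1} is not needed here, which is why this result is much softer than Theorem~\ref{th:dual}.
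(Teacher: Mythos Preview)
Your proposal is correct and follows essentially the same route as the paper: the paper's own proof is the single sentence ``The proof is a direct consequence of the definitions and \eqref{dua-K}'', and you are simply spelling out that consequence---the standard range/kernel duality $\mathrm{cl\,}R_T^\dag=M_2\Leftrightarrow\ker\R_T^{\dag *}=\{0\}$, the substitution $\R_T^{\dag *}=\K F^{-1}$ (resp.\ $\e^{\A^{\dag *}}\R_T^{\dag *}=\K F^{-1}$) from \eqref{dua-K}, and the bounded invertibility of $F$ (resp.\ of $F$ and $\e^{\A^{\dag *}}$). Your explicit flagging of the passage from $\ker\K\cap X_1=\{0\}$ to $\ker\K=\{0\}$ on $M_2$, handled via the paper's remark on topology-independence after Definition~\ref{def1}, is more careful than the paper itself, which leaves this implicit.
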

\begin{proof}
 The proof is a direct  consequence of the definitions and  (\ref{dua-K}).
\end{proof}
The conditions of approximate observability may be obtained from the conditions of approximate controllability by duality.
For our system, such conditions were obtained in \cite{Bartosiewicz_1984} in the space $W_1^2([-1,0];\Rset^n)$. In our notations,
this means that the reachability set $R_{T_0}$ for the system~(\ref{eq:2c}) is dense in $D(\A)$ with the norm of the graph. It is
equivalent to the density of $R_{T_0}$ in the space $M_2$.
 In \cite{Salamon_Pitman_1984}, it is shown that approximate observability and approximate controllability for such neutral type
systems are dual and this does not depend on the state space, and then duality holds in
the space $M_2$ \cite[Corollary 4.2.10]{Salamon_Pitman_1984}.

From the necessary and sufficient conditions in \cite[Th. 2]{Bartosiewicz_1984} and Theorem~\ref{thm:appr_obs},
it is easy to see that approximate observability holds if the following two conditions are verified:
\begin{enumerate}
\item  $\forall \lambda \in \Cset$, $\mathrm{rank}\begin{pmatrix} \Delta^*(\lambda) & C^* \end{pmatrix}=n$,
\item  $\mathrm{rank}\begin{pmatrix} A_{-1}^* & C^* \end{pmatrix}=n$.
\end{enumerate}
Note that the second condition is not necessary. The two conditions are verified when exact observability holds
(the second condition is the condition (ii) of Theorem~\ref{th:exobs} for the particular case  $\lambda =0$).
This emphasizes the difference between the concepts of exact and approximate observability.

\section{Examples}
Let us give some simple examples to illustrate our results.

\noindent\textit{Example 1.}
 Consider the system
$$
\dot z(t)=\dot z(t-1),
$$
where $z(t) \in \Rset^n, n > 1$, with two possible outputs
$$
y_0(t)=\C_0 x(t)= z(t), \qquad y_1(t)=\C_1 x(t)= z(t-1).
$$
The conditions of observability are verified, and the system is exactly observable for the output $y_0$ or
$y_1$.

\noindent\textit{Example 2.}
Consider the system
$$
\left \{
\begin{array}{rcl}
\dot z_1(t)&=&0\\
\dot  z_2(t)&=&\dot z_2(t-1) + z_1(t-1),
\end{array}
\right.
$$
where $z(t)=(z_1(t),z_2(t)) \in \Rset^2$,
with two possible output
$$
  y_0(t)=\C_0 x(t)= z(t), \qquad y_0(t)=\C_1 x(t)= z(t-1).
$$
The system with the output $y_1$ is exactly observable for the time $T>1$ and not observable for $T=1$.
The system with the output $y_0$ is not observable for any time $T>0$.
\section{Conclusion}
 For a large class of linear neutral type systems which include  distributed delays
we give the duality relation between exact controllability and
exact observability.  The characterization of exact observability is deduced.
\section*{References}

\def\cprime{$'$} \def\cprime{$'$}

\end{document}